\newtheorem{theorem}{Theorem}
\newtheorem*{theorem*}{Theorem}
\newtheorem{definition}[subsection]{Definition}
\newtheorem{lemma}[subsection]{Lemma}
\newtheorem{proposition}[subsection]{Proposition}
\newtheorem{corollary}[subsection]{Corollary}
\newcommand{\Z}{\mathbb{Z}} 					
\newcommand{\N}{\mathbb{N}}					
\newcommand{\R}{\mathbb{R}}					
\newcommand{\C}{\mathbb{C}}					
\newcommand{\T}{\mathbb{T}}					
\newcommand{\A}{\mathcal{A}}									
\newcommand{\PermSpace}{\mathcal{G}_\A}						
\newcommand{\Simplex}{\Delta_\A}								
\newcommand{\IETSpace}{\PermSpace \times \R_+^\A}				
\newcommand{\IETSpaceNorm}{\PermSpace \times \Simplex}			
\newcommand{\Dom}{X_\A}									
\newcommand{\DomNorm}{\widetilde{X}_\A}						
\newcommand{\DomExt}{\mathfrak{X}_\A}							
\newcommand{\DomExtNorm}{\widetilde{\mathfrak{X}}_\A}			
\newcommand{\RV}{\mathcal{RV}}								
\newcommand{\ZorichMap}{\mathcal{Z}}							
\newcommand{\ZorichNorm}{\widetilde{\ZorichMap}}					
\newcommand{\ZorichMapExt}{\ZorichMap_{\textup{ext}}}			
\newcommand{\ZorichNormExt}{\ZorichNorm_{\textup{ext}}}
\newcommand{\IET}{(\pi, \lambda)}
\newcommand{\IETExt}{(\pi, \lambda, \tau)}
\newcommand{\IETExtn}[1]{(\pi^{#1}, \lambda^{#1}, \tau^{#1})}
\newcommand{\SBS}[2]{\mathcal{S}_{#1}#2}						
\newcommand{\Saff}{\textup{Aff}(\gamma, \omega)}
\newcommand{\Function}[5]{\begin{array}{cccc} #1 : & #2 & \rightarrow & #3 \\ & #4 & \mapsto & #5 \end{array}}
\newcommand{\orth}[1]{#1^\bot}
\newcommand{\ZC}[2]{{}A^T_{#1, #2}}
\newcommand{\HC}[2]{{}B^T_{#1, #2}}
\newcommand{\LC}[2]{B^{-1}_{#1, #2}}
\newcommand{\HRC}[2]{{}A^T_{#1, #2}}
\newcommand{\LRC}[2]{A^{-1}_{#1, #2}}
\newcommand{\Kpi}{\textup{Ker}(\Omega_\pi)}
\newcommand{\Kpim}[1]{\textup{Ker}(\Omega_{\pi^{#1}})}
\tikzstyle{vector}=[->,very thick,blue!70!black,line cap=round]
\tikzstyle{cone}=[thin,blue!50!black,fill opacity=0.8]
\newcommand\jetcone[4]{
    \pgfmathanglebetweenpoints{\pgfpointanchor{#1}{center}}{\pgfpointanchor{#2}{center}}
    \edef\tmpang{\pgfmathresult}
    \coordinate (tmpO) at ($(#1)+(\tmpang:0.02)$); 
    \coordinate (tmpC) at ($(#2)+(\tmpang-180:{abs(#4)+0.4})$); 
    \coordinate (tmpL) at ($(tmpC)+(\tmpang+90:#3)$); 
    \coordinate (tmpLL) at ($(tmpC)+(\tmpang+90.3:#3)$); 
    \coordinate (tmpR) at ($(tmpC)+(\tmpang-90:#3)$); 
    \coordinate (tmpRR) at ($(tmpC)+(\tmpang-90:#3)$); 
    \fill[blue!50!black, opacity=0.7, top color=blue!50!black!10,bottom color=blue!40!black!20,shading angle=10\tmpang,rotate=\tmpang]   (tmpLL)  arc(94:266.75:{#4} and {#3+0.002})  --  (tmpO) -- cycle;
    \fill[blue!50!black,fill, opacity=0.35, top color=blue!50!black!10,bottom color=blue!50!black!0,shading angle=\tmpang,rotate=\tmpang]  (tmpC) ellipse({#4} and {#3});
    \begin{scope}
    	\clip[rotate=\tmpang] (tmpRR) -- (tmpO) -- (tmpL) arc(90:-90:{#4} and {#3});
    \end{scope}
    \draw[thin,blue!50!black, rotate=\tmpang]    (tmpLL)  arc(94:-94.75:{#4} and {#3+0.002})  --  (tmpO) -- cycle;
}
\newcommand\jetconee[4]{
    \pgfmathanglebetweenpoints{\pgfpointanchor{#1}{center}}{\pgfpointanchor{#2}{center}}
    \edef\tmpang{\pgfmathresult}
    \coordinate (tmpO) at ($(#1)+(\tmpang:0.0)$); 
    \coordinate (tmpC) at ($(#2)+(\tmpang-180:{abs(#4)+0.4})$); 
    \coordinate (tmpL) at ($(tmpC)+(\tmpang+90:#3)$); 
    \coordinate (tmpLL) at ($(tmpC)+(\tmpang+90.3:#3)$); 
    \coordinate (tmpR) at ($(tmpC)+(\tmpang-90:#3)$); 
    \coordinate (tmpRR) at ($(tmpC)+(\tmpang-90:#3)$); 
    \draw[thin,blue!50!black,fill opacity=0.2, top color=blue!50!black!50,bottom color=blue!50!black!60,shading angle=\tmpang,rotate=\tmpang]  (tmpC) ellipse({#4} and {#3});
    \begin{scope}
        \clip[rotate=\tmpang] (tmpRR) -- (tmpO) -- (tmpL) arc(90:-90:{#4} and {#3});
    \end{scope}
    \draw[thin,blue!50!black,fill opacity=0.8, top color=blue!50!black!30,bottom color=blue!40!black!50,shading angle=\tmpang,rotate=\tmpang]   (tmpLL)  arc(94:266.75:{#4} and {#3+0.002})  --  (tmpO) -- cycle;
}
\date{}
\begin{document}
\sloppy
\title[Affine IETs with a singular conjugacy to an IET]{Affine interval exchange maps with a singular conjugacy to an IET}
\author{Frank Trujillo and Corinna Ulcigrai}
\maketitle

\begin{abstract}
We produce affine interval exchange transformations (AIETs) which are \emph{topologically} conjugated to (standard) interval exchange maps (IETs) via a \emph{singular conjugacy}, i.e.~a diffeomorphism $h$ of $[0,1]$ which is $\mathcal{C}^{0}$ but not $\mathcal{C}^{1}$ and such that the pull-back of the Lebesgue measure is a \emph{singular} invariant measure for the AIET. 
In particular, we show that for almost every IET $T_0$ of $d\geq 2$ intervals and any vector $\omega$ belonging to the central-stable space $E_{cs}(T_0)$, for the Rauzy-Veech renormalization, any AIET $T$ with log-slopes given by $\omega$ and semi-conjugated to $T_0$ is topologically conjugated to $T$. In addition, if $\omega \notin E_s(T_0)$, the conjugacy between $T$ and $T_0$ is singular.
\end{abstract}

\section{Introduction and main results}
The study of circle diffeomorphisms is a classical topic in dynamical systems, initiated by H. Poincaré (we refer, for example, to \cite{katok_introduction_1995}, \cite{sinai_topics_1994} or \cite{de_melo_one-dimensional_1993} for a basic overview). 

Two fundamental questions addressed by the theory of circle diffeomorphisms are the \emph{existence} and the \emph{regularity} of a topological conjugacy between a minimal circle diffeomorphism $f$ and its linear, {isometric model (which is a rigid rotation $R_\alpha$, where $\alpha $ is the rotation number $\alpha$ of $f$)}, namely of a homeomorphism $h$ such that $h\circ f= R_\alpha \circ h$  (see for example M. Herman's work \cite{herman_sur_1979}). 

The initial motivation for Poincaré to study circle diffeomorphisms is that they appear naturally as first-return maps of flows on surfaces of genus one. \emph{Interval exchange transformations}, or IETs (more precisely \emph{generalized} IETs as well as, as special cases, affine or standard IETs) for short, appear as first-return maps of flows on surfaces and are thus seen as natural generalizations of circle diffeomorphisms to higher genus (with rigid rotations and affine circle diffeomorphisms, in turn, generalizing IETs and affine IETs, respectively). Thus, it is natural to ask to what extent the theory of circle diffeomorphisms extends to generalized interval exchange maps. 
 Efforts in this direction have been ongoing since the early eighties and this is currently an active area of research, see for example \cite{levitt_decomposition_1987, forni_solutions_1997, marmi_cohomological_2005, marmi_affine_2010, marmi_linearization_2012, ghazouani_local_2020, ghazouani_priori_2021}. 
 We refer the reader to the articles \cite{marmi_linearization_2012, ghazouani_priori_2021} for further references and a more in-depth discussion about linearization and rigidity questions for GIETs. 

In this paper, we give a contribution to the study of the regularity of conjugacies between an affine interval exchange transformation (AIET) and its linear (piecewise) isometric model, namely, a (standard) interval exchange transformation (IET). In particular, we produce AIETs that are conjugated to a standard minimal interval exchange transformation via a conjugacy $h$ which is $\mathcal{C}^0$ but fails to be $\mathcal{C}^1$. These AIETs are uniquely ergodic, and the unique invariant measure is \emph{singular} with respect to the Lebesgue measure; in this case, we say that they have a \emph{singular conjugacy} to a (minimal) IET. 
A one-parameter family of examples of AIETs with a singular conjugacy to a minimal IET was constructed by Isabelle Liousse in \cite{liousse_echanges_2002}. We provide a criterium that allows constructing AIETs having singular conjugacy with its (piecewise) isometric model for full measure classes of IET rotation numbers (see the Theorem in \S~\ref{sec:regularity} below for an informal statement, as well as Theorems~\ref{thm:topconjugacy} and \ref{thm:regularity} in \S~\ref{sec:main} for precise results). 

AIETs (whose formal definition we postpone to \S~\ref{sec:AIETs}) can be seen as a generalization to higher genus of \emph{affine} -also known as \emph{piecewise linear} or for short PL- circle diffeomorphisms (defined in \S~\ref{sec:singular} below). In the setting of PL-circle diffeomorphisms, singularity of the conjugacy to the corresponding linear model is a well-known phenomenon, as the results summarized in \S~\ref{sec:singular} show. Contrary to (PL-)circle diffeomorphisms, though, for which the existence of a topological conjugacy follows from the classical work of A. Denjoy as long as there is sufficient regularity (see \S~\ref{sec:wandering}), for AIETs (and GIETs in general) semi-conjugated to a minimal IET, the existence of a topological conjugacy is not granted, i.e.~regularity assumptions are not sufficient to exclude the presence of wandering intervals: 
several results (see \S~\ref{sec:wandering}) 
show not only the existence but also the ubiquity of wandering intervals in AIETs. 
Therefore, a crucial part of the present paper 
is to prove a criterion for the absence of wandering intervals which can be applied to full measure sets of IETs rotation numbers. 

We now summarize some of the results in the literature concerning the singularity of conjugacies of affine circle diffeomorphisms (\S~\ref{sec:singular}), and the existence of wandering intervals in circle homeomorphisms and AIETs (\S~\ref{sec:wandering}). An informal statement of the main result of this paper is given at the end of this introduction, in  \S~\ref{sec:regularity}.



\subsection{Singular conjugacies in PL setting}\label{sec:singular}
It is well known that sufficiently regular circle diffeomorphisms are smoothly conjugated to the corresponding linear rotation for a full measure set of rotation numbers, a celebrated result proved by M.~Herman \cite{herman_sur_1979} and later extended by J.-C.~Yoccoz \cite{yoccoz_conjugaison_1984} to all Diophantine rotation numbers (thus providing the optimal arithmetic condition).

M.~Herman  \cite{herman_sur_1979} was the first to show that a circle homeomorphism with irrational rotation number that is piecewise linear and has exactly two points (called \emph{break points}) where the first derivative is discontinuous has an invariant measure absolutely continuous with respect to Lebesgue if and only if its break points lie on the same orbit.

More generally, a homeomorphism of the circle $f: \T \to \T$ is called a \emph{piecewise smooth circle homeomorphism} or a \emph{P-homeomorphism} if it is a smooth orientation preserving homeomorphism, differentiable away from countable many points, so-called \emph{break-points}, at which left and right derivatives, denoted by $Df_-$, $Df_+$ respectively, exist but do not coincide, and such that $\log Df$ has bounded variation. 
A P-homeomorphism which is \emph{linear} (i.e.~\emph{affine}) in each domain of differentiability is called a \emph{PL-homeomorphism}. 

In \cite{liousse_nombre_2005}, Isabelle Liousse showed that the invariant measure of a generic 
PL-homeomorphism with a finite number of break points and irrational rotation number of bounded type is singular with respect to Lebesgue. The generic condition in \cite{liousse_nombre_2005} is explicit and appears as an arithmetic condition on the logarithm of the slopes of the PL-homeomorphism. For general P-homeomorphisms with exactly one break point and irrational rotation number, A.~Dzhalilov and K.~Khanin \cite{dzhalilov_invariant_1998} showed that the associated invariant probability measure is singular with respect to Lebesgue. The case of two break points has been studied by A. Dzahlilov, I. Liousse \cite{dzhalilov_circle_2006} in the bounded rotation number case, and by A. Dzahlilov, I. Liousse and D. Mayer \cite{dzhalilov_singular_2009} for arbitrary irrational rotation numbers. In both works, the authors conclude the singularity of the associated invariant probability measure. 

More recently, for P-homeomorphisms of class $C^{2 + \epsilon}$ with a finite number of break points and nonzero mean nonlinearity, K. Khanin and S. Kocić \cite{khanin_hausdorff_2017} showed that the Hausdorff dimension of their unique invariant measure is equal to $0$, provided that their rotation number belongs to a certain (explicit) full-measure set of irrational numbers. In the same work, the authors show that this result cannot be extended to all irrational rotation numbers.

\subsection{Existence and absence of wandering intervals}\label{sec:wandering}
Given a piecewise continuous map $f: I \to I$ defined on a compact interval, a subinterval $J \subset I$ is said to be a \emph{wandering interval} of $f$ if the forward iterates of $J$ by $f$ are pairwise disjoint. It is also common in the literature to include in the definition of a wandering interval the request that the $\omega$-limit set of $J$ is not finite. Their existence or absence plays an important role in one-dimensional dynamics, which has been widely studied in different settings. 

A celebrated theorem of A. Denjoy \cite{denjoy_sur_1932} shows that sufficiently smooth circle diffeomorphisms with irrational rotation number (more precisely, as soon as the logarithm of the derivative has bounded variation) do not admit wandering intervals. J.-C.~Yoccoz \cite{yoccoz_il_1984} proved that Denjoy's result remains valid for sufficiently smooth circle homeomorphisms with non-flat critical points, in particular, for analytic circle homeomorphisms. In the context of smooth interval transformations, M.~Martens, W.~de Melo, and S.~van Strien \cite{martens_julia-fatou-sullivan_1992} obtained a more general version of the previous result by showing that any $C^2$ map on a compact interval with non-flat critical points has no wandering intervals. 

On the other hand, several examples of transformations with wandering intervals exist in the literature. In \cite{denjoy_sur_1932}, A.~Denjoy constructed examples of $C^1$ circle diffeomorphisms with irrational rotation number having wandering intervals. This result was later improved to $C^{2 - \epsilon}$ regularity by M.~Herman \cite{herman_sur_1979} and similar examples in the context of multimodal maps of the interval are well-known (see, e.g., \cite[\S 3]{de_melo_one-dimensional_1993}). G.~Hall \cite{hall_cinfty_1981} constructed an example of a $C^\infty$ circle map with at most two flat critical points admitting wandering intervals. Hall's construction was recently generalized by Liviana Palmisano \cite{palmisano_denjoy_2015}, who obtained similar examples for circle maps with a half-critical point. 

\smallskip
\noindent {\it Wandering intervals in AIETs.} G. Levitt \cite{levitt_decomposition_1987} showed the existence of non-uniquely ergodic affine interval exchange transformations having wandering intervals and raised the question of whether unique-ergodicity for this class of transformations would be enough to rule out the existence of wandering intervals. R. Camelier and C. Gutierrez gave a negative answer to this question in \cite{camelier_affine_1997}. The example by Camelier and Gutierrez was studied in detail by M. Cobo \cite{cobo_piece-wise_2002}. 
The same example was later generalized by X. Bressaud, P. Hubert, and A. Maass \cite{bressaud_persistence_2010}, and their techniques have been recently used by M. Cobo, R. Gutierriez-Romo and A. Maass to show the existence of wandering intervals in a well-known transformation, the cubic Arnoux-Yoccoz map. 
{A condition for the absence of wandering intervals in the setting of substitutions (which include, in particular, AIETs with periodic rotation number) with unit eigenvalues is proved by X. Bressaud, A. Bufetov, and P. Hubert in \cite{bressaud_deviation_2014}.}
All the previous results in the setting of AIETs concern only an exceptional class among these transformations (namely those whose combinatorial rotation number is periodic), but in \cite{marmi_affine_2010}, S.~Marmi, P.~Moussa, and J.-C.~Yoccoz approached the general case and showed that AIETs which are semi-conjugated to a minimal IET of $d\geq 4$ intervals, under a full measure condition on the IET, possess wandering intervals.


\subsection{Regularity and Oseledets flags}\label{sec:regularity}
Let $T$ be an AIET with $d\geq 2$ continuity intervals which we assume is semi-conjugated to a minimal IET $T_0$. The IET $T_0$ can be thought of as a \emph{combinatorial} (IET) \emph{rotation number} for $T$, namely, it encodes combinatorial information on the structure of orbits of $T$ (see \S~\ref{sec:RV} for a precise definition) and, assuming that $T_0$ is minimal, it plays the role of \emph{irrationality} of the (IET) rotation number. In addition to the IET rotation number, the AIET is determined by the vector of \emph{slopes} $s=(s_i)_{i=1}^d \in \R^d_{+}$, recording the slope $s_i$ of each affine branch of $T$ (see \S~\ref{sec:AIETs}). Let $w =(w_i)_{i=1}^d\in \mathbb{R}^d$ denote the \emph{log-slope vector} of $T$, whose entries are given by $w_i:=\log s_i$ for $1\leq i\leq d$. 

A key realization by M. Cobo in \cite{cobo_piece-wise_2002} is that to study wandering intervals as well as the regularity of conjugacies for AIETs (under a full measure condition on the combinatorial rotation number), it is essential to know the position of the log-slope vector $\omega$ in the Oseledet's filtration of the \emph{Zorich cocycle} (a celebrated tool in the study of IETs which provide a multi-dimensional generalization of the continued fraction entries, see \S~\ref{sec:RV}). { The action of the Zorich cocycle on $\omega$ describes indeed how log-slopes change under renormalization (see \S~\ref{sec:RV}). One can show that for a.e.~IET $T_0$ on $d \geq 2$ intervals, there exist subspaces $\{0\}\subsetneq E_{s}(T_0) \subset E_{cs} (T_0) \subsetneq  \R^d$ (where $s$ and ${cs}$ stand for \emph{stable} and \emph{central stable}, respectively), such that if $\omega$ belongs to $E_s (T_0)$ (resp.~$E_{cs}(T_0)$) then the norm of the log-slopes decreases exponentially (resp. grows subexponentially) under renormalization, while if $ \omega\in \R^d \backslash E_{cs}(T_0) $, the norm of log-slopes grows exponentially. 

More precisely, the combination of several classical works  \cite{veech_gauss_1982, zorich_finite_1996, forni_deviation_2002, avila_simplicity_2007} shows that  the Zorich cocycle has $2g$ non-zero  Lyapunov exponents (where  $1 \leq g \leq \frac{d}{2}$ is determined by the combinatorics of the IET, see \eqref{eq:dimension_oseledets} in \S~\ref{sec:filtration}) of the form
$$\theta_{1}> \theta_2  >  \dots > \theta_g>0 > -\theta_g > \dots -\theta_2>-\theta_1,$$
and the Oseledets filtration of a generic $T_0$ has the form 
\[\R^d = E_{g} \supsetneq E_{g-1} \dots \supsetneq E_{1} \supsetneq E_{0} \supseteq E_{-1} \supsetneq  \dots \supsetneq  E_{-g+1}  \supsetneq E_{-g} \supsetneq \{0\},\]
where $E_i:= E_i(T_0)$ (resp.~$E_{-i}:= E_{-i}(T_0)$) is associated to the Lyapunov exponent $\theta_{g-i+1}$ (resp.~$-\theta_{-(g-i+1)}$), for $1\leq i\leq g$, and vectors in $E_0\backslash E_{-1}$ are associated to a zero Lyapunov exponent. We can then see that $E_{s}(T_0):=E_{-1}(T_0)$ and $E_{cs}(T_0):= E_0(T_0)$. 
The space $E_{-g}$ is called the \emph{strong-stable} space and we denote it by $E_{ss}= E_{ss}(T_0)$. 
 
We remark that $E_{cs}=E_s= E_{-1}$ (i.e.~there are no non-zero vectors associated with a zero exponent) if and only if $g = \frac{d}{2}$. }

\smallskip
 Under full measure conditions on $T_0$, the log-slope vector $\omega$ of $T$, which necessarily belongs to {$E_{2}(T_0)$} by \cite[Lemma 3.3]{camelier_affine_1997} (see also \cite{marmi_affine_2010} and \S~\ref{sec:prop:non_empty_affine} below), the following holds:
 
\begin{itemize}
\item If $\omega \in E_{ss}(T_0)$ then $T$ is $C^\infty$ conjugated to $T_0$, by \cite[Theorem 1]{cobo_piece-wise_2002}.
\item If $\omega \in E_{s}(T_0) \setminus E_{ss}(T_0)$ then $T$ is $C^1$, and not $C^2$, conjugated to $T_0$, by \cite[Theorem 1]{cobo_piece-wise_2002} and \cite[Theorem A]{liousse_echanges_2002}, 
\item If $g \geq 2$ and {$\omega \in E_{g - 1}(T_0) \setminus E_{g - 2}(T_0)$}, the AIET $T$ possesses a wandering interval, by \cite[Theorem 3.2]{marmi_affine_2010}. 
\end{itemize}
It is clear that in the first two cases, the AIET $T$ has no wandering intervals. 

\smallskip
The main results of this article (Theorem~\ref{thm:topconjugacy} and Theorem~\ref{thm:regularity} stated in \S~\ref{sec:main}) imply the following:
\begin{theorem*}
 Under full measure conditions on $T_0$, if the log-slope vector $\omega$ belongs to $E_{cs}(T_0) \setminus E_{s}(T_0)$, then $T$ is $C^0$ but not $C^1$-conjugate to $T_0$. 
\end{theorem*}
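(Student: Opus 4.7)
The plan is to split the theorem into its two assertions and handle them separately: (i) the existence of a topological conjugacy---which, once we know $T$ is semi-conjugated to $T_0$ via some continuous surjection $h$, is equivalent to the absence of wandering intervals of $T$---and (ii) the fact that $h$ cannot be $C^1$. Both assertions exploit the Oseledets behavior of the log-slope vector $\omega$ under the Zorich cocycle: the hypothesis $\omega\in E_{cs}(T_0)\setminus E_s(T_0)$ says precisely that the norms of the iterates of $\omega$ grow sub-exponentially but do not decay.

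For (i), I would establish a criterion for the absence of wandering intervals valid whenever $\omega\in E_{cs}(T_0)$. The strategy is to pass to a sequence $(n_k)$ of Zorich (or Rauzy--Veech) accelerated times along which the cocycle enjoys Oseledets-type balanced estimates (available for a.e.\ $T_0$ by \cite{zorich_finite_1996, forni_deviation_2002}). Along such a sequence, the fact that $\omega\in E_{cs}$ translates into sub-exponential control of the derivatives of the renormalized AIET on each floor of its Rauzy tower. Assuming for contradiction a wandering interval $J$, one can then compare its forward images $T^n(J)$ with the floors of the renormalization towers and transfer the distortion estimates to obtain uniform lower bounds on $|T^n(J)|$ incompatible with the summability $\sum_n|T^n(J)|\leq 1$ forced by disjointness. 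This is essentially a quantitative refinement of Denjoy-type arguments, adapted to the non-hyperbolic setting via the Oseledets filtration.

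For (ii), I would argue by contraposition. If $h$ were $C^1$, differentiating the conjugacy relation $h\circ T = T_0\circ h$ and using $DT_0\equiv 1$ off the discontinuities yields $\log DT(x) = \log Dh(x) - \log Dh(T(x))$, so $\log DT$ is a continuous coboundary. Iterating, the Birkhoff sums of $\log DT$ stay uniformly bounded, and translating them into iterates of $\omega$ under the Zorich cocycle (as in \cite{cobo_piece-wise_2002, marmi_affine_2010}), this boundedness forces $\omega\in E_s(T_0)$, contradicting the hypothesis. Equivalently, a $C^1$ conjugacy would transform Lebesgue on $T_0$ into an absolutely continuous invariant measure for $T$, whereas the arguments from (i) actually produce a \emph{singular} invariant measure once $\omega\notin E_s$.

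The main obstacle is step (i). When $\omega\in E_s$ the log-slopes contract exponentially and $T$ behaves almost like an IET at every renormalization scale; conversely, for $\omega$ outside $E_{cs}$ wandering intervals exist by \cite{marmi_affine_2010}. The assumption $\omega\in E_{cs}\setminus E_s$ sits exactly on the critical boundary where one has only sub-exponential control, and the delicate point is to obtain distortion bounds sharp enough to run the contradiction. This will require a careful blend of Oseledets-regularity estimates, quantitative unique ergodicity of $T_0$, and the combinatorial geometry of Rauzy towers, which is where the bulk of the technical work lies.
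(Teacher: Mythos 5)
Your two-part decomposition coincides with the paper's (Theorems~\ref{thm:topconjugacy} and~\ref{thm:regularity}), and the general philosophy---renormalize, pass to good Zorich times, control Birkhoff sums of the log-slope---is the right one. Both halves, however, have a gap exactly at the point where the hypothesis $\omega\in E_{cs}(T_0)\setminus E_s(T_0)$ must do real work.

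For the absence of wandering intervals, the problem is the phrase ``sub-exponential control''. For a vector of zero Lyapunov exponent, Oseledets' theorem only gives $\|\omega^{(n)}\|=e^{o(n)}$, and this is \emph{not} enough: a lower bound $|T^n(J)|\geq e^{-o(n)}|J|$ (equivalently $S_nf_\omega\geq -o(n)$) is perfectly compatible with $\sum_n |T^n(J)|<\infty$. What is actually needed is \emph{uniform boundedness} of $S_{m_k}f_\omega(x)$ along some infinite sequence of times $m_k\to\pm\infty$, and this requires two ingredients you do not supply: (a) an infinite subsequence of Zorich times along which $\|{}B^T_{0,n_k}\!\mid_{E_{cs}}\|$ is \emph{uniformly bounded}---condition~(\ref{cond:bounded_central_stable}) of the BC condition (Definition~\ref{def:BC_condition}), obtained from recurrence of the natural extension to a positive-measure set where the Oseledets data is uniformly controlled, not from sub-exponential growth alone; and (b) a combinatorial device (Proposition~\ref{prop:boundedseq}) producing, for \emph{each} point $x$, times $m_k$ (depending on $x$, both forward and backward, as Lemma~\ref{lemma:wanderingintervalscriterium} requires) at which $S_{m_k}f_\omega(x)$ splits into a \emph{bounded} number of special Birkhoff sums of level $n_k$; this uses positivity and boundedness of the block ${}A^T_{n_k,n_k+N}$ to locate returns of the orbit of $x$ to the same floor of a tower within neighbouring towers. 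At a generic time the decomposition involves unboundedly many pieces and no estimate results; a straight Denjoy-type distortion argument cannot work here, since wandering intervals genuinely exist for nearby choices of $\omega$.

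For the non-$C^1$ statement, your coboundary computation is fine ($S_nf_\omega(x)=\log Dh(x)-\log Dh(T^nx)$ is bounded, and in fact $\omega^{(n)}_\alpha=\log Dh(x)-\log Dh(T^{q^{(n)}_\alpha}x)\to 0$ by uniform continuity of $\log Dh$ on shrinking inducing intervals), but the conclusion ``this boundedness forces $\omega\in E_s(T_0)$'' does not follow: boundedness, and even decay to zero, of $\|\omega^{(n)}\|$ is a priori compatible with a zero Lyapunov exponent, so no contradiction with $\omega\notin E_s$ is reached. The missing ingredient is the uniform lower bound $\inf_n\|\omega^{(n)}\|>0$ for vectors with nonzero central component (Corollary~\ref{cor:central_lower_bound}), which the paper derives from the fact that the height cocycle, dual to the length cocycle whose action on $\textup{Ker}(\Omega_\pi)$ is trivial, preserves the projection of $\omega^{(n)}$ onto $\textup{Ker}(\Omega_{\pi^{(n)}})$, together with transversality of the central space to $\textup{Ker}(\Omega_\pi)^\bot$. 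This is where $\omega\notin E_s$ enters; without it neither your $C^1$ argument nor the paper's stronger singularity argument (which replaces continuity of $Dh$ by the Lebesgue differentiation theorem applied to the density of a putative absolutely continuous invariant measure) can close.
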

\noindent In particular, $T$ as above does not admit wandering intervals. Moreover, it will follow from Theorem \ref{thm:regularity} that, in this case, the unique invariant measure of $T$ is singular with respect to the Lebesgue measure. To prove the theorem above (in the form of Theorems~\ref{thm:topconjugacy} and \ref{thm:regularity}), we introduce a full measure condition in the space of IETs (see Definitions~\ref{def:BC_condition} and \ref{def:HS_Condition} and Proposition \ref{prop:fullmeasure}) which allows us to control the behaviour of the Zorich cocycle when restricted to $E_{cs}(T_0)$  and then studying Birkhoff sums of the piecewise constant function associated to the log-slope vector. 

The existence of a topological conjugacy (namely Theorem~\ref{thm:topconjugacy}, proved in \S~\ref{sec:wandering}) generalizes to full measure a result for periodic type IETs proved in the setting of substitutions by X. Bressaud, P. Hubert and A. Maass in \cite{bressaud_deviation_2014}. Let us point out that the absence of wandering intervals might also be inferred from the deep dynamical dichotomy proved in recent work \cite{ghazouani_priori_2021} by S. Ghazouani and the second author, but this would require assuming a much more subtle and technical Diophantine-like condition (see Definition 3.3.4 in \cite{ghazouani_priori_2021}), while the proof we provide here is simpler and self-contained.  
Furthermore, we prove a result about Birkhoff sums of piecewise constant functions in the  space over IETs, which is of independent interest (see Proposition~\ref{prop:boundedseq} and in particular, Corollary~\ref{cor:boundedseq}). 
The singularity of the conjugacy (Theorem~\ref{thm:regularity}) can also be deduced from the work of M. Cobo \cite{cobo_piece-wise_2002}, which is in turn based on work by W. Veech \cite{veech_metric_1984} (see Appendix~\ref{app:Cobo}). We provide an independent proof in \S~\ref{sc:singularity}. 

\smallskip
We conclude by commenting on the interest of this result from the point of view of the study of generalized interval exchange transformations (GIETs), in the light of recent and ongoing work that evidences the crucial role played by AIETs in the study of GIETs. 

 In \cite{ghazouani_priori_2021}, S. Ghazouani and the second author proved that to a given GIET, under a full measure condition on the IET rotation number, given by an IET $T_0$, one can associate an AIET called the (unstable) \emph{shadow}. When the log-slope $\omega$ of this shadow is non-zero, one expects wandering intervals and the lack of a topological conjugacy, a result which for now was proved in genus two, see \cite{ghazouani_priori_2021}. 

On the other hand, when $\omega$ is zero, and the \emph{boundary} of the GIET (an invariant defined by S. Marmi, P. Moussa, and J.-C. Yoccoz in \cite{marmi_linearization_2012}) is zero, it is shown in \cite{ghazouani_priori_2021} that one can prove, in the spirit of M. Herman's work \cite{herman_sur_1979} on circle diffeomorphisms, the existence of a differentiable conjugacy between the GIET and its IET model (see also \cite{marmi_linearization_2012}, and \cite{ghazouani_local_2021} for local results describing $\mathcal{C}^r$-conjugacy classes of IETs for $r\geq 2$ and $r=1$ respectively). The result of this paper indicates that the assumption that the boundary is zero is necessary to have a non-singular conjugacy. 

The study of GIETs which have non-zero boundary (but total non-linearity zero) is undertaken in \cite{berk_rigidity_2022} by P. Berk and the first author. For those, when the log-slope $\omega$ of the (unstable) shadow of \cite{ghazouani_local_2021} vanishes, one can define a finer notion of (central) shadow, which allows recovering rigidity results that naturally generalize the known rigidity results for PL-circle diffeomorphisms. The absence of wandering intervals for AIETs that we prove in this paper (namely Theorem~\ref{thm:topconjugacy}) then provides the leverage to show the absence of wandering intervals also for the GIETs in the considered class.

\section{Background material and notations}\label{sec:background}
Let us start by recalling some of the basic notions and properties related to IETs and introduce some notations. The objects we will consider are now classical; we refer the interested reader to \cite{viana_ergodic_2006}, \cite{yoccoz_interval_2010} for a complete introduction to the subject as well as for proofs and additional details. 

\subsection{Standard and affine interval exchange transformations}\label{sec:AIETs}

A \emph{standard interval exchange transformation}, or simply an \emph{interval exchange transformation} (IET), is a bijective right-continuous piecewise translation of an interval with a finite number of discontinuities. More precisely, given a compact interval $I \subset \R$, we say that a bijection $T: I \to I$ is an IET on $d \geq 2$ intervals if there exists a partition of $I$ on $d$ disjoint left-closed and right-open subintervals of $I$ such that $T$ is a translation when restricted to each of the intervals on the partition.  An IET with $d \geq 2$ intervals can be described by the way the intervals are exchanged and their lengths. For this, we fix a finite alphabet $\A$ with $d$ elements and consider pairs $(\pi_0,\pi_1)$ of bijections $\pi_0,\pi_1:\mathcal A\to\{1,\dots,d\}$ to denote the order of the intervals before and after the exchange. We always assume that the datum $(\pi_0,\pi_1)$ is \emph{irreducible}, i.e.
$$\pi_1\circ\pi_0^{-1}(\{1,\ldots,k\})\!=\!\{1,\ldots,k\}\Rightarrow k=d. $$
 The class of \emph{irreducible} IETs, i.e.~IETs with irreducible data $(\pi_0,\pi_1)$ and $d \geq 2$ intervals, can then be parametrized by the set $\mathscr{I}_\A^+ = \IETSpace,$ where $\PermSpace $ denotes the set of irreducible pairs $ (\pi_0,\pi_1) $ of bijections of $d$ symbols, and the set of \emph{normalized IETs} on $d$ intervals, that is, IETs defined on the unit interval $I = [0, 1)$, by $\mathscr{I}_\A = \IETSpaceNorm$, where 
\[\Simplex = \left\{\lambda\in\R_+^{\A} \left| \ |\lambda|_1 =1 \right.\right\}.\]
We endow $\IETSpace$ and $\IETSpaceNorm$ with the product measure $d\pi \times \textup{Leb}$, where $d\pi$ denotes the counting measure in $\PermSpace$. 
{We say that a property holds for \emph{almost every} IET on $d$ intervals if it holds for almost every point of $\IETSpace$ with respect to this product measure.}

\subsubsection*{Affine interval exchange transformations.} An \emph{affine interval exchange transformation} (AIET) is a bijective right-continuous piecewise affine of an interval with a finite number of discontinuities and having positive slope on each continuity interval. Similarly to IETs, we encode AIETs using the order in which intervals are exchanged, their lengths, and the logarithm of the slope on each continuity interval (the use of log-slopes instead of slopes will be justified later on, see \eqref{eq:log_slopes_height_cocycle}).  Thus, $ e^{\omega_\alpha} $ is, by definition, the slope of the restriction of $T$ to the interval indexed by $\alpha\in \A$. Notice that if this interval has length $\eta_\alpha>0$, its image by $T$ has length $\eta_\alpha e^{\omega_\alpha}$. 

We can parametrize the set of AIETs on $d$ intervals by
\[ \mathscr{A}_\A^+ = \left\{ (\pi, \eta, \omega) \in \IETSpace \times \R^\A \,\left|\, \sum_{\alpha \in \A} \eta_\alpha e^{\omega_\alpha} = \sum_{\alpha \in \A} \eta_\alpha \right.\right\}.\] 
The last condition guarantees that the sum of the lengths of the images under $T$ of the subintervals is the same as the domain length. We parametrize the set of \emph{normalized AIETs} analogously by a set $\mathscr{A}_\A \subset \IETSpaceNorm \times \Delta_\A$. 

\subsection{Rauzy-Veech and Zorich induction}\label{sec:RV}
A classical induction procedure for IETs, known as the \emph{Rauzy-Veech induction}, as well as its subsequent normalizations and accelerations, are well known to be extremely useful in studying IETs (as well as AIETs and GIETs). We recall some basic definitions and notations in this section and refer the reader to \cite{viana_ergodic_2006} or \cite{yoccoz_interval_2010} for a detailed introduction.

\subsubsection*{Rauzy-Veech induction algorithm.}
The Rauzy-Veech induction associates to almost every interval exchange transformation (IET) another IET, with the same number of intervals, by inducing the initial transformation into an appropriate subinterval. The subinterval is chosen according to the \emph{type} of the IET, which encodes whether the `last' interval in the partition, i.e., $I_{\pi_0^{-1}(d)}$, is longer or smaller than the interval going to the last position after applying the transformation, i.e., $I_{\pi_1^{-1}(d)}$. This procedure can be iterated infinitely many times if and only if the IET satisfies \emph{Keane's condition}. By \cite{keane_interval_1975}, any IET satisfying Keane's condition is minimal. The Rauzy-Veech induction defines an oriented graph structure in $\PermSpace$, called the \emph{Rauzy graph}. Each connected component in this graph is called a \emph{Rauzy class}.  The infinite path in the Rauzy graph defined by an IET satisfying Keane's condition is called \emph{combinatorial rotation number}.

We denote the set of \emph{IETs verifying Keane's condition} by $\Dom \subset \IETSpace$, and the \emph{Rauzy-Veech induction} and the \emph{Zorich acceleration} by $$\RV: \Dom \to \Dom,\quad\quad \ZorichMap: \Dom \to \Dom,$$
respectively. The map $\ZorichMap$ is defined as $\ZorichMap\IET = \RV^{z\IET}\IET$ {where the measurable map $z: \Dom \to \N$ is defined so that ${z\IET}$ is the largest integer such that $\IET, \RV \IET, \dots, \RV^{z\IET-1}\IET$ all have the same type.} 

\subsubsection*{Notations} Given an IET $(\pi, \lambda) \in \Dom$, we denote its \emph{type} by $\epsilon(\pi, \lambda) \in \{0, 1\}$, its \emph{winner} (resp. \emph{loser}) \emph{symbol} by $\alpha_\epsilon(\pi, \lambda)$ (resp. $\alpha_{1 - \epsilon(\pi, \lambda)}$). { 
Assume that $T_0 = \IET$ verifies Keane's condition so that $\RV^n(T_0)$ is defined for any $n\in \N$. We denote the  \emph{combinatorial rotation number} of $\IET$ by $\gamma\IET$.  
For any $n \geq 0$, we denote }
\begin{equation*}
\begin{aligned}
&T^{(n)} = \big(\pi^{(n)}, \lambda^{(n)}\big) = \RV^n(T_0), & \text{ orbit of } \IET \text{ by } \RV, \\
& I^{(n)}(T_0), & \text{ domain of definition of } \RV^n(T_0),\\
& I^{(n)}_\alpha(T_0), & \text{ intervals exchanged by } \RV^n(T_0),\\
& q^{(n)}(T_0) = (q^{(n)}_\alpha(T_0))_{\alpha \in \A}, & \text{ the return time of } I^{(n)}_\alpha \text{ to } I^{(n)} \text{ by } T_0.
\end{aligned}
\end{equation*}
If there is no risk of confusion, we will omit the explicit dependence on $T_0$ in all of the above notations. 

\subsubsection*{Dynamical partitions}
Given an IET $T_0 = \IET$ verifying Keane's condition, we can associate a sequence of \textit{dynamical partitions} and \emph{Rohlin towers} as follows. We define the \emph{dynamical partition} $\mathcal{P}^{(n)}$ of $I$ \emph{at level} $n$ as
$$ \mathcal{P}^{(n)} := \bigcup_{\alpha \in \A} {\mathcal{P}^{(n)}_\alpha}, \qquad \text{where}\quad \mathcal{P}^{(n)}_\alpha = \big\{ I_\alpha^{(n)}, T\big(I_\alpha^{(n)}\big), \cdots, T^{q_\alpha^n - 1}\big(I_\alpha^{(n)}\big)\big\}. $$
One can verify that $\mathcal{P}^{(n)}$ is a partition of $[0,1)$ into subintervals and that, for each $\alpha \in \A$, the collection $\mathcal{P}^{(n)}_\alpha$ is a Rohlin tower of height $q_\alpha^n$. Notice that if $n>m$, then $\mathcal{P}^{(n)}$ is a refinement of $\mathcal{P}^{(m)}$. 

\subsubsection*{Zorich cocycle}
In the following, for any $F: X \to X$, $\phi: X \to GL(d, \Z)$ and $n > m \geq 0$, we denote 
\[ \phi_{m, n}(x) = \phi(F^{n - 1}(x)) \cdot \dots \cdot \phi(F^m(x)).\]
The length vector and the return times of the iterates of an IET by the Zorich map can be described via a cocycle 
\[B: \Dom \to SL(\A, \Z),\]
that we obtain as a proper acceleration of the cocycle
\[\Function{A}{\Dom}{SL(\A, \Z)}{(\pi, \lambda)}{\textup{Id} + E_{\alpha_{\epsilon(\pi, \lambda)}, \alpha_{1 - \epsilon(\pi, \lambda)}}},\]
which encodes the change of the length vector after one step of Rauzy-Veech induction. More precisely, for any $n > m \geq 0$, the cocycles $A^{-1}$ and $A^T$ verify 
\begin{align}
\label{eq:length_prop_cocycle}
& \lambda^{(n)} = \LRC{m}{n}\IET\lambda^{(m)}, \\
\label{eq:height_prop_cocycle}
& q^{(n)} = \HRC{m}{n}\IET q^{(m)},
\end{align}
where $q^{(0)} = \overline{1} \in \R_+^\A$ is the vector whose coordinates are all equal to $1$. 

Defining $B\IET = A\IET\dots A(\pi^{(z\IET - 1)}, \lambda^{(z\IET - 1)}),$ the accelerated cocycles $B^{-1}$ and $B^T$ verify analogous properties with respect to the Zorich map. 

The cocycle $B^T$ is called the \emph{Zorich cocycle} or \emph{Kontsevich-Zorich cocycle}. Since $B^{-1}$ is also sometimes referred to as the Zorich cocycle, and in view of \eqref{eq:length_prop_cocycle}, \eqref{eq:height_prop_cocycle}, to avoid any possible confusion, we will refer to $B^{-1}$ as the \emph{length cocycle} and to $B^T$ as the \emph{height cocycle}.

{
\subsubsection*{Dynamical interpretation of entries}\label{sc:incidence_matrices}

The matrices $\ZC{m}{n}$ (and consequently their accelerations) have the following dynamical interpretation.} The $\alpha\beta$-th entry of the incidence matrix $\ZC{m}{n}$ is the number of times the orbit by $T^{(m)}$ of any $x \in I^{(n)}_{\alpha}$ visits $I^{(m)}_{\beta}$ up to its first return to $I^{(n)}$. The incidence matrix entries also have an interpretation in terms of Rohlin towers. In fact, they describe how the Rohlin towers in the dynamical partition $\mathcal{P}^{(n)}$ can be obtained by \emph{cutting and stacking} 
Rohlin towers of $\mathcal{P}^{(m)}$. More precisely, for any $\alpha$, the Rohlin tower $\mathcal{P}^{(n)}_\alpha$ is obtained by stacking \emph{subtowers} of the Rohlin towers $\mathcal{P}^{(m)}_\beta$, $\beta \in \A$ (namely, sets of the form $\{ T^k J \mid 0 \leq k <q^{m}_\beta\}$ for some subinterval $J\subset I_\beta^{(m)}$). Indeed, the $\alpha\beta$-th entry of the incidence matrix $\ZC{m}{n}$ is the number of subtowers of $\mathcal{P}^{(m)}_\beta$ inside $\mathcal{P}^{(n)}_\alpha$. It follows that $\mathcal{P}^{(n)}_\alpha$ is made by stacking exactly $\sum_{\beta \in \A} (\ZC{m}{n})_{\alpha\beta}$ subtowers of Rohlin towers of $\mathcal{P}^{(m)}$. 

\subsection{Rauzy-Veech induction for AIETs}
The Rauzy-Veech induction and the Zorich acceleration extend naturally to the space of AIETs, as well as all the notions introduced above in the IET setting, such as combinatorial rotation number, dynamical partitions, incidence matrices, etc. Given an AIET $T = (\pi, \eta, \omega)$ satisfying Keane's condition we denote its orbit under $\RV$ by $$T^{(n)} = (\pi^{(n)}, \eta^{(n)}, \omega^{(n)}) = \RV^n (\pi, \eta, \omega), \qquad n \in \N.$$ 
For the sake of simplicity, we will use the notations introduced in the IET setting to denote the intervals of definition and the return times of iterates by $\RV$ of an AIET.

 Let us point out that the incidence matrices $ \ZC{m}{n}$ depend only on the combinatorial rotation number. In particular, given an AIET $T$ and an IET $T_0$, both satisfying Keane's condition and such that $\gamma(T) = \gamma(T_0)$, the incidence matrices of $T$ and $T_0$ coincide. 

In the context of AIETs, the height cocycle verifies an additional property of fundamental importance to us: the change in the log-slope vector of $\RV$ iterates of an AIET is described by the height cocycle. More precisely, given an AIET $T = (\pi, \eta, \omega)$ satisfying Keane's condition, for any $n \geq m \geq 0$,
\begin{equation}
\label{eq:log_slopes_height_cocycle}
\omega^{(n)} = \ZC{m}{n}(\pi, \eta, \omega)\omega^{(m)}.
\end{equation}

\subsection{Oseledet's filtration}\label{sec:filtration}
As mentioned before, the \emph{normalized version of $\ZorichMap$}, which is defined in the subset $\DomNorm \subset \IETSpaceNorm$ of normalized IETs satisfying Keane's condition and that we denote by $$\ZorichNorm: \DomNorm \to \DomNorm,$$ admits a unique invariant probability measure $\mu_{\ZorichNorm}$ equivalent to the Lebesgue measure on $\DomNorm$. Moreover, the height and length cocycles, $B^T$ and $B^{-1}$ are integrable with respect to this invariant measure, and thus they admit invariant Oseledet's filtrations
\begin{gather*}
E_s\IET \subset E_{cs} \IET \subset \R^{\A},\\
F_s\IET \subset F_{cs} \IET \subset \R^{\A},
\end{gather*}
for a.e. $\IET \in \DomNorm$, respectively. 

With these notations, the sets $E_s\IET$, $E_{cs}\IET \,\setminus\, E_s\IET$ and $\R^\A \,\setminus\,E_{cs}\IET$, correspond to the set of vectors with negative, zero and positive Lyapunov exponents for the cocycle $B^T$, respectively. That is, for a.e. $\IET \in \DomNorm$ and for every $v \in \R^\A$, the limit 
\[ \theta(\pi, \lambda, v)= \lim_{n \to +\infty} \frac{\log |\HC{0}{n}\IET v|_1}{n}\]
exists and verifies 
\[ \left\{ \begin{array}{lcl} 
\theta(\pi, \lambda, v) < 0 & \text{if} & v \in E_s\IET,\\
\theta(\pi, \lambda, v) = 0 & \text{if} & v \in E_{cs}\IET \,\setminus\, E_s\IET, \\
\theta(\pi, \lambda, v) > 0 & \text{if} & v \in \R^\A \,\setminus\,E_{cs}\IET. \end{array}\right.\]
Analogous properties hold for the cocycle $B^{-1}$ and its associated splitting. 

We point out that the dimension of these vector spaces depends only on the permutation $\pi$. Indeed, denoting $\Omega_\pi: \R^\A \to \R^\A,$ where 
\begin{equation}
\label{eq:exchange_matrix}
(\Omega_\pi)_{\alpha, \beta} = \left\{ \begin{array}{cl} +1 & \text{if } \pi_1(\alpha) > \pi_1(\beta) \text{ and } \pi_0(\alpha) < \pi_0(\beta), \\
-1 & \text{if } \pi_1(\alpha) < \pi_1(\beta) \text{ and } \pi_0(\alpha) > \pi_0(\beta), \\
0 & \text{in other cases,} 
\end{array}\right.
\end{equation}
we have that  
\begin{equation}
\label{eq:dimension_oseledets}
\begin{aligned}
& \dim(E_s\IET) = g, \quad \dim(E_{cs}\IET) = d - g, { \quad \text{where} \ g := \frac{d - \dim(\textup{Ker}(\Omega_\pi))}{2},}
\end{aligned}
\end{equation}
for a.e. $\IET \in \DomNorm$. 

\section{Statements of the results}
In this section, we state our main results. Let us start by recalling some of the existent results concerning the semi-conjugacies of AIET to IETs. Recall that the \emph{combinatorial rotation number} of an IET that satisfies Keane's condition is, by definition, the infinite path in the Rauzy graph produced by iterating the Rauzy-Veech induction procedure. 

\subsection{Semiconjugacies of AIETs to IETs}
\label{sec:prop:non_empty_affine}

An infinite Rauzy path $\gamma$ is said to be \emph{$\infty$-complete} if every symbol in $\A$ appears infinitely many times as a winner symbol in $\gamma$. It is well-known that any IET satisfying Keane's condition defines an $\infty$-complete Rauzy path in the Rauzy graph and, conversely, any $\infty$-complete Rauzy path determines a unique normalized IET (for a proof, see e.g. \cite[Section 7]{yoccoz_echanges_2005}). 

Given an infinite path $\gamma$ in the Rauzy graph and $\omega \in \R^\A$, we denote by $\Saff$ the set of normalized AIETs with log-slope $\omega$ and combinatorial rotation number $\gamma.$ If a path $\gamma$ is $\infty$-complete, maps in $\Saff$ are semi-conjugated to the unique IET whose rotation number is equal to $\gamma$. More precisely, we have the following.

\begin{proposition}[Proposition 7 in \cite{yoccoz_echanges_2005}]
\label{prop:semi-conjugacy}
Let $T_0$ be an IET such that $\gamma(T_0)$ is $\infty$-complete and let $\omega \in \R^\A$. Then, any $T \in \textup{Aff}(\gamma(T_0), \omega)$ is semi-conjugated to $T_0$ via an increasing surjective map $h: [0, 1) \to [0, 1)$, satisfying $T_0 \circ h = h \circ T$. Moreover, if $T$ has no wandering intervals, then $h$ defines a conjugacy between $T_0$ and $T$.
\end{proposition}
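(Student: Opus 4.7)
The plan is to build $h$ combinatorially, exploiting that $T$ and $T_0$ share the combinatorial rotation number $\gamma := \gamma(T_0)$. The key underlying fact is that Rauzy-Veech cutting-and-stacking is a purely combinatorial procedure: given $\gamma$, both the incidence matrices and the rule by which each tile of $\mathcal{P}^{(n+1)}$ sits inside $\mathcal{P}^{(n)}$ are determined, so for each $n$ the partitions $\mathcal{P}^{(n)}(T)$ and $\mathcal{P}^{(n)}(T_0)$ have the same number of tiles, labelled $(\alpha,k)$ with $0\le k<q^{(n)}_\alpha$, appearing in the same left-to-right order along $[0,1)$. Establishing this combinatorial lemma rigorously (by induction on the Rauzy-Veech steps, showing that each elementary move preserves the combinatorial position of every tile) is the main technical input of the whole construction.

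Granted the lemma, let $D^{(n)}(T)=\{0=x_0^{(n)}<\dots<x_N^{(n)}\}$ and $D^{(n)}(T_0)=\{0=y_0^{(n)}<\dots<y_N^{(n)}\}$ be the ordered sets of left endpoints of the tiles at level $n$. Define $h_n(x_i^{(n)}) := y_i^{(n)}$. Since $\mathcal{P}^{(n+1)}$ refines $\mathcal{P}^{(n)}$ by the same combinatorial prescription for both $T$ and $T_0$, the maps $\{h_n\}_n$ are compatible ($h_{n+1}$ restricted to $D^{(n)}(T)$ equals $h_n$) and glue to an order-preserving bijection $h: D^*(T)\to D^*(T_0)$, where $D^*(\,\cdot\,) := \bigcup_n D^{(n)}(\,\cdot\,)$. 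I would then extend $h$ to $[0,1)$ by $h(x) := \sup\{h(z) : z\in D^*(T),\,z\le x\}$, which gives a non-decreasing extension.

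Because $T_0$ satisfies Keane's condition, the tile lengths of $\mathcal{P}^{(n)}(T_0)$ shrink uniformly to zero with $n$, so $D^*(T_0)$ is dense in $[0,1)$. A non-decreasing function whose image contains a dense set cannot have jumps, so $h$ is continuous; combined with $h(0)=0$ and $\sup_x h(x)=1$ (both because $0$ is always the leftmost partition endpoint and the rightmost endpoints converge to $1$), this yields surjectivity onto $[0,1)$. For the semi-conjugacy $T_0\circ h = h\circ T$, both sides are right-continuous on $[0,1)$ (using continuity of $h$, right-continuity of $T$ and $T_0$, and monotonicity of $h$), so it suffices to verify equality on the dense set $D^*(T)$. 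On $D^*(T)$, the identity reduces to the same combinatorial statement used to set up $h$: $T$ moves a partition endpoint to the endpoint in $D^*(T)$ dictated by the Rauzy structure of the towers of $\mathcal{P}^{(n)}(T)$, and the identical combinatorial prescription describes the action of $T_0$ on $D^*(T_0)$.

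Finally, suppose $T$ has no wandering intervals. If $h$ were not injective, monotonicity would furnish a maximal non-degenerate interval $J\subset[0,1)$ on which $h\equiv p$ for some $p$. Then $h(T^n(J))=\{T_0^n(p)\}$ for every $n\ge 0$, and by minimality of $T_0$ (a consequence of Keane's condition) the points $T_0^n(p)$ are pairwise distinct. Hence the intervals $T^n(J)$ sit in pairwise disjoint level sets of $h$ and are themselves pairwise disjoint, exhibiting $J$ as a wandering interval and contradicting the hypothesis. Therefore $h$ is injective, and a continuous monotone bijection $[0,1)\to[0,1)$ is automatically a homeomorphism, upgrading the semi-conjugacy to a topological conjugacy.
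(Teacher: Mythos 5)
The paper offers no proof of this proposition --- it is imported verbatim from Yoccoz's lecture notes --- so your argument can only be measured against the standard construction, which is indeed the route you take: identify the dynamical partitions $\mathcal{P}^{(n)}(T)$ and $\mathcal{P}^{(n)}(T_0)$ tile-by-tile through the common Rauzy path, define $h$ on the endpoint set $D^*(T)$, and extend monotonically. Several steps are sound: the density of $D^*(T_0)$ does follow from Keane's condition for the \emph{standard} IET $T_0$ (all heights $q^{(n)}_\alpha\to\infty$ while $\sum_\alpha q^{(n)}_\alpha\,|I^{(n)}_\alpha|=1$, so the tile lengths go to zero), continuity of $h$ then follows from density of its image, and the final step (a collapsed interval $J$ would have pairwise disjoint iterates because the points $T_0^n(p)$ are pairwise distinct by minimality of $T_0$) is exactly right.

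The genuine gap is in your verification of $T_0\circ h=h\circ T$: you reduce it to checking the identity on ``the dense set $D^*(T)$'', but $D^*(T)$ --- the endpoints of the dynamical partitions of the \emph{affine} map $T$ --- has no reason to be dense. Indeed $h$ restricted to $D^*(T)$ is an order isomorphism onto $D^*(T_0)$, hence strictly increasing there; if $D^*(T)$ were dense, $h$ would be injective on all of $[0,1)$ and the semi-conjugacy would automatically be a conjugacy. So density of $D^*(T)$ is tantamount to the absence of wandering intervals, which is precisely what you may not assume (and which fails in the Camelier--Gutierrez and Marmi--Moussa--Yoccoz examples the paper is concerned with). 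Right-continuity plus agreement on a non-dense set proves nothing, so the identity must be verified at \emph{every} $x$, e.g.\ by showing that the level-$n$ tile containing $T(x)$ is obtained from the level-$n$ tile containing $x$ by the same combinatorial rule (next floor of the tower, or the appropriate base tile upon return) that governs $T_0$, so that the endpoints defining $h(T(x))$ are exactly the $T_0$-images of those defining $h(x)$. This is within reach of the combinatorial lemma you isolate, but as written the reduction is circular. A smaller issue of the same flavour: for $h$ to map into $[0,1)$ you need $\sup D^*(T)=1$; if a gap of $\overline{D^*(T)}$ were adjacent to the right endpoint, your supremum formula would give $h\equiv 1$ there, so this case must be excluded or the definition adjusted near $1$.
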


However, not all choices of $\gamma$ and $\omega$ are compatible.

\begin{proposition}[Proposition 2.3 in \cite{marmi_affine_2010}]
\label{prop:non_empty_affine}
Let $T_0 = \IET$ be an IET such that $\gamma(T_0)$ is $\infty$-complete and let $\omega \in \R^\A$. Then, $\textup{Aff}(\gamma(T_0), \omega) \neq \emptyset$ if and only if $\langle \omega, \lambda \rangle = 0$.
\end{proposition}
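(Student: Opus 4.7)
The plan is to prove both implications by combining the Rauzy--Veech iteration, the cocycle identity $\omega^{(n)} = \ZC{0}{n}\omega$ from \eqref{eq:log_slopes_height_cocycle}, and the AIET bijectivity identity $\sum_{\alpha} \eta^{(n)}_\alpha(e^{\omega^{(n)}_\alpha} - 1) = 0$ with $\eta^{(n)}_\alpha > 0$, which must hold at every renormalization level for the Rauzy--Veech orbit of an AIET to persist.

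For the necessity direction, I would start from $T = (\pi,\eta,\omega) \in \Saff$, iterate Rauzy--Veech to obtain AIETs $(\pi^{(n)}, \eta^{(n)}, \omega^{(n)})$, and invoke the dynamical interpretation of the entries of $\ZC{0}{n}$ recalled in \S~\ref{sc:incidence_matrices}: the identity $\omega^{(n)}_\alpha = \sum_\beta (\ZC{0}{n})_{\alpha\beta}\omega_\beta$ coincides with the Birkhoff sum $S_{q^{(n)}_\alpha}\omega_{T_0}(x)$ for any $x \in I^{(n)}_\alpha$, where $\omega_{T_0}$ is the step function equal to $\omega_\beta$ on $I_\beta$. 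For a.e.~$T_0$, unique ergodicity yields $\omega^{(n)}_\alpha/q^{(n)}_\alpha \to \langle \omega, \lambda \rangle$ as $n \to \infty$, uniformly in $\alpha$. If this limit were strictly positive, every $\omega^{(n)}_\alpha$ would eventually exceed zero and each summand $\eta^{(n)}_\alpha(e^{\omega^{(n)}_\alpha} - 1)$ would be strictly positive for large $n$, contradicting the bijectivity identity; strict negativity is excluded symmetrically. Hence $\langle\omega,\lambda\rangle = 0$.

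For the sufficiency direction, assuming $\langle \omega, \lambda\rangle = 0$, the plan is a compactness argument: for each $n$, let $C_n \subset \R_+^\A$ be the open cone of $\eta$ whose first $n$ Rauzy--Veech iterates of $(\pi, \eta, \omega)$ follow the prescribed types of $\gamma$; these nest as $C_n \supset C_{n+1}$. The bijectivity hyperplane meets each $C_n$, which I would see by un-inducing a suitably chosen perturbation of the IET length vector $\lambda^{(n)}$ at level $n$ through the AIET Rauzy--Veech dynamics---the choice of perturbation governed by the hypothesis $\langle \omega, \lambda\rangle = 0$. Normalizing candidates $\eta_n \in C_n$ to the simplex and extracting a subsequential limit yields $\eta^\star \in \bigcap_n C_n$ satisfying the bijectivity identity, producing $(\pi, \eta^\star, \omega) \in \Saff$. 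The hard part here is that the length vector $\eta^{(n)}$ does not transform linearly under Rauzy--Veech---a type-0 step reads $\eta^{(n+1)}_\alpha = \eta^{(n)}_\alpha - \eta^{(n)}_\beta e^{\omega^{(n)}_\beta}$ rather than the IET formula $\lambda^{(n+1)}_\alpha = \lambda^{(n)}_\alpha - \lambda^{(n)}_\beta$---and the exponential factors must be controlled along the tower; the assumption $\langle\omega, \lambda\rangle = 0$ is precisely what prevents a one-sided blow-up of the slopes, mirroring the contradiction derived in the necessity argument.
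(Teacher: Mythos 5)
The paper does not actually prove this statement --- it is quoted verbatim as Proposition~2.3 of \cite{marmi_affine_2010} --- so your attempt has to be judged on its own terms. Your \emph{necessity} direction is essentially sound: the bijectivity identity $\sum_\alpha \eta^{(n)}_\alpha(e^{\omega^{(n)}_\alpha}-1)=0$ does hold at every renormalization level, $\omega^{(n)}_\alpha$ is indeed the special Birkhoff sum $S_{q^{(n)}_\alpha}f_{T_0,\omega}$ on $I^{(n)}_\alpha$ (cf.~\eqref{SBS_cocycle_relation}), the return times $q^{(n)}_\alpha$ tend to infinity by $\infty$-completeness, and under unique ergodicity the averages $\omega^{(n)}_\alpha/q^{(n)}_\alpha$ converge to $\langle\omega,\lambda\rangle$, so a nonzero value would force all entries of $\omega^{(n)}$ to share a strict sign eventually, contradicting the identity. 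The one caveat is that you invoke unique ergodicity (``for a.e.\ $T_0$'') while the proposition only assumes $\gamma(T_0)$ is $\infty$-complete; for the purposes of this paper, where the result is only applied to full-measure classes of IETs, this is harmless, but it should be flagged as an extra hypothesis. A useful identity you do not state explicitly, and which streamlines both directions, is the exact invariance $\langle\omega^{(n)},\lambda^{(n)}\rangle=\langle\omega,\lambda\rangle$ for all $n$, which follows from $\omega^{(n)}=\ZC{0}{n}\omega$ together with \eqref{eq:length_prop_cocycle}.

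The genuine gap is in the \emph{sufficiency} direction. Producing a point of $C_n$ on the bijectivity hyperplane can indeed be done by solving the level-$n$ constraint $\sum_\alpha\eta^{(n)}_\alpha(e^{\omega^{(n)}_\alpha}-1)=0$ with $\eta^{(n)}>0$ (solvable precisely because $\langle\omega^{(n)},\lambda^{(n)}\rangle=0$ and $\lambda^{(n)}>0$ force $\omega^{(n)}$ to have entries of both signs unless it vanishes) and then applying $n$ inverse affine Rauzy--Veech moves; you only gesture at this, but it is fixable. What is not addressed is the passage to the limit: after normalizing and extracting a subsequence you obtain $\eta^\star\in\bigcap_n\overline{C_n}$, but the sets $C_n$ are cut out by \emph{strict} inequalities (the type of each induction step), so $\eta^\star$ could a priori lie on a face where two competing lengths coincide; then $(\pi,\eta^\star,\omega)$ fails Keane's condition, its Rauzy--Veech orbit leaves the path $\gamma$ after finitely many steps, and it does not belong to $\textup{Aff}(\gamma(T_0),\omega)$. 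Likewise nothing rules out $\eta^\star_\alpha=0$ for some $\alpha$, which would degenerate the number of intervals. In the linear (IET) case these degeneracies are excluded by the standard analysis of the nested cones $A_{0,n}\overline{\R_+^\A}$ for an $\infty$-complete path; here the recursion for $\eta^{(n)}$ involves the exponential weights $e^{\omega^{(n)}_\beta}$, exactly the difficulty you name as ``the hard part'' without resolving it. As it stands, the sufficiency half is a plan rather than a proof, and this is the step where the actual argument of \cite{marmi_affine_2010} does real work.
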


\subsection{Main results}\label{sec:main}
We now state the main results of this article. 

\begin{theorem}
\label{thm:topconjugacy}
For almost every IET $T_0$ and for any $\omega \in E_{cs}(T_0) \,\setminus\, E_s(T_0)$, any AIET $T \in \textup{Aff}(\gamma(T_0), \omega)$ is topologically conjugated to $T_0$. 
\end{theorem}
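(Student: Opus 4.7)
The plan is to combine the semi-conjugacy of Proposition \ref{prop:semi-conjugacy} with a proof that $T$ admits no wandering intervals. Indeed, since $\gamma(T_0)$ is $\infty$-complete, any $T \in \textup{Aff}(\gamma(T_0), \omega)$ is semi-conjugated to $T_0$ via an increasing surjection $h$ satisfying $T_0 \circ h = h \circ T$, and $h$ is a topological conjugacy precisely when $T$ has no wandering intervals. The theorem therefore reduces to ruling out wandering intervals under the hypothesis $\omega \in E_{cs}(T_0) \setminus E_s(T_0)$.

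Suppose for contradiction that $J \subset [0,1)$ is a wandering interval of $T$. Since the iterates $\{T^k(J)\}_{k \geq 0}$ are pairwise disjoint in $[0,1)$, one has $\sum_k |T^k(J)| \leq 1$. On the other hand, since $T$ is piecewise affine with log-slope function $\omega$, each iterate satisfies
\[ |T^k(J)| \geq |J| \cdot \inf_{x \in J}\exp\bigl(S^T_k \omega(x)\bigr), \]
where $S^T_k \omega$ denotes the Birkhoff sum along the $T$-orbit of $x$. A contradiction will follow if we exhibit a sequence of times $k_j \to \infty$ along which $S^T_{k_j} \omega$ stays uniformly bounded on $J$, forcing $\sum_j |T^{k_j}(J)| = \infty$.

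The key identity linking Birkhoff sums of $\omega$ to the Zorich cocycle is \eqref{eq:log_slopes_height_cocycle}: for $x \in I^{(n)}_\alpha$ and $k = q^{(n)}_\alpha$, one has $S^T_k \omega(x) = (\ZC{0}{n}\omega)_\alpha = \omega^{(n)}_\alpha$. The hypothesis $\omega \in E_{cs}(T_0)$ guarantees by Oseledets that $|\omega^{(n)}|$ grows subexponentially, which is however too weak to yield boundedness along a subsequence. The remedy is to invoke the full-measure Diophantine-type conditions stated in Definitions \ref{def:BC_condition} and \ref{def:HS_Condition} together with the Birkhoff-sum boundedness result of Proposition \ref{prop:boundedseq} and Corollary \ref{cor:boundedseq}, which jointly produce pairs $(n_j, \alpha_j)$ along which $|\omega^{(n_j)}_{\alpha_j}|$ stays uniformly bounded. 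The corresponding return times $k_j = q^{(n_j)}_{\alpha_j}$ yield the desired sequence, provided $J$ is contained in the relevant tower levels; this can be arranged by first shrinking $J$ so that for all large $n$ it is contained in a single level of the dynamical partition $\mathcal{P}^{(n)}$.

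The main obstacle is the extraction of a bounded subsequence for the central-stable direction of the Zorich cocycle: Oseledets supplies only subexponential growth on $E_{cs}$, whereas we need genuine recurrence of a suitable coordinate of $\omega^{(n)}$ to a compact set, which is delicate precisely because the associated Lyapunov exponent vanishes. This is exactly the content of the paper's Birkhoff-sum machinery and is the most technical ingredient in the argument. Once such recurrence is at hand, the remaining step is the straightforward volume comparison described above: the iterates $T^{k_j}(J)$ are pairwise disjoint, each of length at least $c|J|$ for a positive constant $c$, contradicting $\sum_k |T^k(J)| \leq 1$. Hence $T$ has no wandering intervals, and $h$ is a topological conjugacy.
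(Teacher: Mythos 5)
Your proposal follows essentially the same route as the paper: reduce via Proposition~\ref{prop:semi-conjugacy} to showing the absence of wandering intervals, and obtain that absence from the bounded Birkhoff sums along a subsequence supplied by Proposition~\ref{prop:boundedseq} (which is indeed where all the technical work lives, exactly as you identify). The only deviation is in the last step: the paper cites Cobo's criterion (Lemma~\ref{lemma:wanderingintervalscriterium}, an if-and-only-if in terms of convergence of $\sum_n e^{S_n f_\omega(x_0)}$), whereas you re-derive the relevant direction by the disjointness/length comparison $\sum_k |T^k(J)|\le 1$ versus $|T^{k_j}(J)|\ge c|J|$. That substitution is legitimate, but the way you justify that $S_{k_j}f_\omega$ is essentially constant on $J$ is not quite right as stated: the levels of $\mathcal{P}^{(n)}$ for the underlying IET shrink to points, so no nondegenerate $J$ lies in a single level for all large $n$; you should instead argue that a (maximal) wandering interval of the AIET meets the orbit of the discontinuity set only finitely often, so after splitting $J$ into finitely many pieces the Birkhoff sums are constant on each piece --- or simply quote Lemma~\ref{lemma:wanderingintervalscriterium} as the paper does. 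With that repair the argument is complete.
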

{\noindent A special case of this theorem, namely, the same result for the (measure zero) set of IETs whose combinatorial rotation number is periodic (also known as \emph{periodic-type} IETs), was proved, in the setting and language of substitutions, in \cite{bressaud_deviation_2014}.} The proof of Theorem~\ref{thm:topconjugacy} is presented in \S\ref{sec:topconj}.
 
We also prove the following.
\begin{theorem}
\label{thm:regularity}
For almost every IET $T_0$ and for any $\omega \in E_{cs}(T_0) \,\setminus\, E_s(T_0)$, any AIET $T \in \textup{Aff}(\gamma(T_0), \omega)$ is uniquely ergodic and its unique invariant probability measure is singular with respect to the Lebesgue measure.
\end{theorem}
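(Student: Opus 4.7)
The argument splits into unique ergodicity and singularity. For the first, I would invoke Theorem~\ref{thm:topconjugacy}: the AIET $T$ is topologically conjugate to $T_0$ via a homeomorphism $h$ with $T_0\circ h = h\circ T$. Since by Masur--Veech almost every IET $T_0$ is uniquely ergodic (with Lebesgue as its unique invariant probability measure), the push-forward $\mu:=h^{-1}_*\mathrm{Leb}$ is the unique $T$-invariant probability measure. Because $T$ is a piecewise diffeomorphism (hence non-singular with respect to Lebesgue), the Lebesgue decomposition of any $T$-invariant measure is itself $T$-invariant, so by unique ergodicity $\mu$ is either absolutely continuous or singular with respect to $\mathrm{Leb}$. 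The task therefore reduces to ruling out the absolutely continuous case.

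Assume for contradiction that $\mu\ll \mathrm{Leb}$ with density $\rho$. Since $T$ is piecewise affine with log-slopes prescribed by the piecewise constant function $\varphi(x):=\log DT(x)$ (equal to $\omega_\alpha$ on the $\alpha$-th continuity interval), $T$-invariance of $\rho\,d\mathrm{Leb}$ yields the transfer equation $\rho(x)=\rho(Tx)\,e^{\varphi(x)}$ almost everywhere, i.e.
\[ \varphi(x)=\log\rho(x)-\log\rho(Tx). \]
Thus $\varphi$ is a measurable $T$-coboundary with transfer function $\psi:=\log\rho\in L^1(\mu)$ (and, after a standard truncation/approximation argument, one may regard $\psi$ as square-integrable on large sets). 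Telescoping gives $S_n\varphi(x)=\psi(x)-\psi(T^n x)$, whose $L^2(\mu)$-norm is uniformly bounded in $n$.

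The next step is to connect this bound to the Zorich cocycle via the dynamical partitions $\mathcal{P}^{(n)}$. By \eqref{eq:log_slopes_height_cocycle}, the Birkhoff sum of $\varphi$ along the tower over the base $I^{(n)}_\alpha$ is the constant $\omega^{(n)}_\alpha=(B^T_{0,n}\omega)_\alpha$. Combining this with the uniform $L^2(\mu)$-bound on $S_{q^{(n)}_\alpha}\varphi$ and the fact that $\mu$ assigns comparable mass to each element of $\mathcal{P}^{(n)}_\alpha$ (under the full-measure conditions of Definitions~\ref{def:BC_condition}--\ref{def:HS_Condition}), one obtains a non-trivial upper bound of the form $(\omega^{(n)}_\alpha)^2\,\mu(\mathcal{P}^{(n)}_\alpha)\leq C$, so that $(\omega^{(n)}_\alpha)^2 q^{(n)}_\alpha \,\mu(I^{(n)}_\alpha)\lesssim C$. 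Equivalently, the coboundary hypothesis forces $\omega^{(n)}$ to decay at a rate faster than what is allowed for vectors outside the stable subspace $E_s(T_0)$.

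The contradiction will then be obtained by feeding this forced decay into Proposition~\ref{prop:boundedseq} (equivalently Corollary~\ref{cor:boundedseq}), which characterises the possible behaviour of Birkhoff sums of piecewise constant functions whose value vectors live in $E_{cs}(T_0)\setminus E_s(T_0)$: such sums cannot have sub-stable decay unless the generating vector in fact lies in $E_s(T_0)$. Thus $\omega\in E_s(T_0)$, contradicting our hypothesis. I expect the main obstacle to be this last step: converting the soft $L^2(\mu)$-coboundary information into a sharp enough comparison with $\mathrm{Leb}(I^{(n)}_\alpha)$ and $q^{(n)}_\alpha$ to invoke the Oseledets/cocycle dichotomy on $E_{cs}/E_s$. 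This requires careful control on the distortion of $h$ through the tower geometry, which is exactly where the full-measure Diophantine-type condition on $T_0$ enters the argument.
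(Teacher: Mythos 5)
Your setup is correct and matches the paper's: unique ergodicity follows from Theorem~\ref{thm:topconjugacy} plus Masur--Veech, the Lebesgue decomposition argument reduces the problem to ruling out $\mu\ll\mathrm{Leb}$, and the transfer equation $\rho=(\rho\circ T)\,e^{f_\omega}$ correctly identifies $f_\omega$ as a measurable coboundary with transfer function $\psi=\log\rho$ (the paper works with $\psi=\log\varphi\circ h$ on the IET side and evaluates along return times to get $\psi\circ T_0^{q^{(n)}_\alpha}-\psi=\omega^{(n)}_\alpha$ on $I^{(n)}_\alpha$, exactly as you intend).

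The gap is in how you extract a contradiction. Your $L^2$-type bound $(\omega^{(n)}_\alpha)^2\,\mu(\mathcal{P}^{(n)}_\alpha)\leq C$ only yields that $\omega^{(n)}_\alpha$ is \emph{bounded} along the good subsequence (the towers are balanced there, so $\mu(\mathcal{P}^{(n_k)}_\alpha)$ is bounded below), and boundedness is perfectly consistent with $\omega\in E_{cs}(T_0)\setminus E_s(T_0)$: for central vectors the cocycle images $\omega^{(n_k)}=B^T_{0,n_k}\omega$ are bounded above \emph{and} below along the BC subsequence (Lemma~\ref{lem: bound_cocycle_central} and Corollary~\ref{cor:central_lower_bound}). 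So no decay is forced and no contradiction follows. Moreover, Proposition~\ref{prop:boundedseq}/Corollary~\ref{cor:boundedseq} are pure upper bounds on Birkhoff sums; they do not provide the ``characterisation'' you invoke in the last step to conclude $\omega\in E_s(T_0)$. The two ingredients actually needed, and missing from your plan, are: (i) the quantitative lower bound $\delta:=\tfrac14\inf_k|\omega^{(n_k)}|>0$, which comes from the fact that the projection of $\omega^{(n)}$ onto $\textup{Ker}(\Omega_{\pi^{(n)}})$ is preserved by the height cocycle (Corollary~\ref{cor:central_lower_bound}, via duality of the length and height cocycles); and (ii) a mechanism forcing $|\omega^{(n_k)}_\alpha|<2\delta$, which the paper gets not from $L^2$ averaging but from approximate continuity of the measurable function $\psi$ (Lebesgue differentiation plus Egorov, Lemma~\ref{lem:locally_constant}) combined with the HS condition, which guarantees that for many $y$ both $y$ and $T_0^{q^{(n_k)}_\alpha}(y)$ lie in a single ball of radius comparable to $|I^{(n_k)}|$ where $\psi$ oscillates by less than $2\delta$. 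Without (i) and (ii) the coboundary hypothesis cannot be contradicted. (A smaller issue: $\log\rho$ need not be integrable, so the ``standard truncation'' making $\psi\in L^2(\mu)$ would itself need justification; the paper's pointwise argument sidesteps this.)
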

\noindent {Theorem~\ref{thm:regularity} can be deduced combining Theorem~\ref{thm:topconjugacy} with a result proved by M. Cobo \cite[Theorem 1]{cobo_piece-wise_2002}, which in turns { rely  on results by}  W. Veech~\cite{veech_gauss_1982} (see Appendix~\ref{app:Cobo}, where the result and the deduction are presented). The proof we give in this paper, which is given in \S~\ref{sc:singularity}, { has the advantage of being self-contained and perhaps more transparent.}} 

Let us mention that by the \emph{duality} of the heights and lengths cocycles it follows that 
\begin{equation}\label{rk:centralinclusion}
E_{cs}(T_0) \subset \lambda^\bot,
\end{equation} for a.e. IET $T_0$ (we refer the interested reader to \cite{zorich_deviation_1997} for a precise definition of dual cocycle and to \cite[pages 384-385]{cobo_piece-wise_2002} for a proof of this fact). Thus, if $\pi \in \PermSpace$ is such that $\textup{Ker}(\Omega_\pi) \neq \{0\}$, where $\Omega_\pi$ is the matrix given by \eqref{eq:exchange_matrix}, it follows from \eqref{eq:dimension_oseledets} and Proposition \ref{prop:non_empty_affine} that the set $\textup{Aff}(\gamma(T_0), \omega)$ in Theorems \ref{thm:topconjugacy} and \ref{thm:regularity} is non-empty. 

\subsection{The full measure Diophantine-type conditions}\label{sec:fullmeasure}
Let us now state explicitly the generic condition satisfied by an IET $T_0$ for Theorem \ref{thm:topconjugacy} to hold. This condition is an example of a \emph{Diophantine-type condition} on an IET rotation number. 
\begin{definition}[The BC condition]
\label{def:BC_condition}
We say that an IET $T_0 = \IET$ satisfies the \emph{Bounded Central} Condition (or, for short, the BC Condition) if it verifies Keane's condition, it is Oseledets generic, $\gamma(T_0)$ is $\infty$-complete, and there exists a sequence $(n_k)_{k\in \mathbb{N}} \subset \N$ verifying the following:
\begin{enumerate}[(i)]
\item \label{cond:positive_matrix} There exists $N \in \N$ and a constant $K > 0$ such that
\[ 1 \leq \ZC{n_k}{n_k + N}\IET_{\alpha\beta} \leq K, \qquad \forall \alpha, \beta \in \A, \,\forall k\in \mathbb{N}. \]
\item \label{cond:bounded_central_stable} There exists a constant $V > 0$ such that 
\[ \left \|\ZC{0}{n_k}\IET\mid_{E_{cs}(\lambda, \pi)} \right \| \leq V, \qquad \forall k\in \mathbb{N}. \]
\end{enumerate}
\end{definition}
\noindent As we shall see, this is a full measure condition in the space of IETs (see Proposition~\ref{prop:fullmeasure} below).

\smallskip
For the proof of Theorem~\ref{thm:regularity}, it is also useful to introduce the following condition.
\begin{definition}[HS Condition]
\label{def:HS_Condition}
We say that an IET $T_0 = \IET$ satisfies the \emph{high singularities} Condition (or, for short, the HS Condition) if it verifies Keane's condition and there exist $C > 0$ and a sequence $(n_k)_{k\in \mathbb{N}} \subset \N$ verifying the following:
\begin{enumerate}[(i)]
\item\label{cond:balanced_heights} $\max_{\alpha, \beta \in \A} \frac{q^{(n_k)}_\alpha}{q^{(n_k)}_\beta} < C.$
\item \label{cond:continuous_iterates} $T^i\mid_{I^{(n_k)}}$ is continuous for $0 \leq i \leq \tfrac{1}{4} \max q^{(n_k)}_\alpha$.
\end{enumerate}
\end{definition}

\begin{proposition}
\label{prop:fullmeasure}
Almost every irreducible IET satisfies the BC and HS Conditions.
\end{proposition}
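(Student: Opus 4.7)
The plan is to produce a common subsequence $(n_k)_{k\in\mathbb{N}} \subset \mathbb{N}$ of Rauzy--Veech times witnessing both the BC and HS conditions, by combining four ingredients: ergodicity of the normalized Zorich map $\widetilde{\mathcal{Z}}$ on $(\widetilde{X}_\A, \mu_{\widetilde{\mathcal{Z}}})$; the Oseledets theorem for the Zorich cocycle $B^T$; Veech's construction of a positive loop in every Rauzy class; and a recurrence argument for zero-exponent cocycles. All the uniform constants $N, K, V, C$ will be produced upfront as deterministic constants depending only on the Rauzy class.

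For BC(i), I rely on Veech's classical fact that every Rauzy class contains a finite loop $\gamma_0$ whose associated matrix product $A^T(\gamma_0)$ has all entries strictly positive. Setting $N := |\gamma_0|$ and $K := \max_{\alpha,\beta}(A^T(\gamma_0))_{\alpha\beta}$, the cylinder $\mathcal{U}_{\gamma_0}$ of IETs whose initial $N$ Rauzy--Veech steps trace $\gamma_0$ has positive $\mu_{\widetilde{\mathcal{Z}}}$-measure; Birkhoff's theorem then gives, for a.e.~$T_0$, a positive-density set of times $n$ at which $A^T_{n,n+N}(T_0) \equiv A^T(\gamma_0)$, which yields BC(i) with the prescribed uniform $K$. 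For HS(i), I invoke Veech's classical balance lemma at Zorich times, which is again a positive-density event for a.e.~IET. For HS(ii), I combine the two: when $A^T_{n-N,n}$ is positive and the heights at step $n$ are balanced, $I^{(n)}$ sits inside a single original atom and deep inside the towers of $\mathcal{P}^{(n-N)}$, and a standard dynamical-partition argument then forces $T^i|_{I^{(n)}}$ to be continuous for $i$ up to a constant fraction of $\min_\alpha q_\alpha^{(n-N)}$, which by HS(i) (after possibly enlarging $N$) dominates $\tfrac{1}{4} \max_\alpha q_\alpha^{(n)}$.

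The main obstacle is BC(ii). Oseledets only provides $\|A^T_{0,n}|_{E_{cs}(T_0)}\| = e^{o(n)}$, since all Lyapunov exponents on $E_{cs}$ vanish, whereas BC(ii) requires uniform boundedness along a subsequence. To upgrade subexponential growth to boundedness, I would use a recurrence argument of Atkinson type. By Lusin's theorem, fix a compact set $\mathcal{K} \subset \widetilde{X}_\A$ of measure close to $1$ on which the Oseledets splitting of $B^T$ varies continuously and its factors are uniformly transverse. Poincar\'e recurrence gives a positive-density set of Zorich times $\ell$ at which $\widetilde{\mathcal{Z}}^\ell T_0 \in \mathcal{K}$; along these returns, the operator norm $\|A^T_{0,n}|_{E_{cs}(T_0)}\|$ (with $n$ the Rauzy--Veech time corresponding to $\widetilde{\mathcal{Z}}^\ell T_0$) is comparable, up to uniform constants depending only on $\mathcal{K}$, to the central block of the cocycle in adapted bases at $T_0$ and at $\widetilde{\mathcal{Z}}^\ell T_0$. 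Combining this comparability with the vanishing of the top Lyapunov exponent on $E_{cs}$ and an Atkinson-type recurrence argument for zero-drift subadditive processes extracts a further subsequence along which $\|A^T_{0,n}|_{E_{cs}(T_0)}\| \leq V$ for some uniform $V$.

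The final step is synchronization. The sets of ``good'' Zorich times for BC(i), HS(i), and HS(ii) each have positive density, hence so does their intersection by ergodicity; BC(ii) produces, a priori separately, an infinite sequence of good times. By choosing the Lusin parameter $\epsilon$ in the BC(ii) argument strictly smaller than the measure of this common positive-density event, and by replacing $\mathcal{K}$ by its intersection with the cylinder $\mathcal{U}_{\gamma_0}$ (still of positive measure by inner regularity), the two infinite subsequences share infinitely many common times, yielding the desired $(n_k)$ satisfying all four conditions simultaneously with the same uniform constants $N, K, V, C$. The main technical subtlety is the Atkinson-type argument for BC(ii) together with this final synchronization; the remaining steps follow standard techniques from the ergodic theory of the Zorich cocycle.
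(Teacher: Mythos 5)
Your overall architecture (a positive Rauzy loop for BC(i), recurrence to a positive-measure set obtained via Luzin, and a final synchronization of good times) matches the paper's strategy, but two of your steps have genuine gaps.

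The first concerns BC(ii). You correctly identify that Oseledets only gives $\|A^T_{0,n}|_{E_{cs}(T_0)}\| = e^{o(n)}$ and that this must be upgraded to boundedness along a subsequence, but the "Atkinson-type recurrence argument for zero-drift subadditive processes" you invoke is not an available off-the-shelf result: Atkinson's theorem concerns \emph{additive} real-valued cocycles with zero mean, whereas $\log\|A^T_{0,n}|_{E_{cs}}\|$ is only subadditive, and returning to a Lusin set on which the splitting varies continuously does not by itself bound the central block of the cocycle in adapted bases (zero exponent still permits unbounded subexponential oscillation of that block). The paper closes this gap by a structural, not recurrence-theoretic, argument: by \cite[Proposition 7.6]{yoccoz_interval_2010} the \emph{length} cocycle acts as the identity (in suitable bases) on $\textup{Ker}(\Omega_\pi)$, hence by duality the composition $\pi_{\textup{Ker}(\Omega_{\pi^{(n)}})}\circ A^T_{0,n}$ is uniformly bounded; since for a.e.\ zippered rectangle the central Oseledets space $E_c$ is transverse to $\textup{Ker}(\Omega_{\pi^{(n)}})^\bot$, one gets $\|A^T_{0,n}|_{E_c}\|\leq C(\pi^{(n)},\lambda^{(n)},\tau^{(n)})$ with a constant depending \emph{only on the data at time $n$} (through the angle between $E_c^{(n)}$ and $\textup{Ker}(\Omega_{\pi^{(n)}})^\bot$), and Luzin plus recurrence then gives the uniform $V$. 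Without this duality input (or a genuine proof of your subadditive recurrence claim), BC(ii) remains unproven.

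The second gap is HS(ii). Positivity of $A^T_{n-N,n}$ together with balance of the heights does \emph{not} imply that $T^i|_{I^{(n)}}$ is continuous for $0\leq i\leq \tfrac14\max_\alpha q^{(n)}_\alpha$: that condition is equivalent to asking that every singularity of the \emph{level-zero} map $T$ sits at height at least $\tfrac14\max_\alpha q^{(n)}_\alpha$ inside its level-$n$ Rohlin tower, and the positions of these singularities within the towers are not determined by the incidence matrices (a singularity may lie on a very low floor of a perfectly balanced tower built from a positive matrix). This information is encoded by the suspension data: the paper imposes the condition $\tau^{(n_k)}\in\Xi_{\pi^*}$, which says the partial sums $\sum_{\pi_0(\beta)<\pi_0(\alpha)}\tau_\beta$ exceed half the corresponding zippered-rectangle heights, and obtains it by recurrence of the \emph{natural extension} of the Zorich map to a positive-measure set. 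Your proof never introduces the natural extension or the $\tau$ data, so HS(ii) is not actually established. The remaining ingredients (the positive loop $\gamma$, taken twice so that both $A^T_{0,N}$ at time $n_k$ and at time $n_k+N$ equal $A_\gamma$, balance from positivity, and the Fubini step passing from full measure in the space of zippered rectangles back to full measure in the space of IETs) are as you describe.
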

\noindent For the sake of clarity of exposition, we postpone the proof of the above proposition to \S\ref{sc:fullmeasureproof} as it requires the introduction of the natural extension of the Zorich renormalization as well as several definitions and notations that will not appear anywhere else in the article. 

\smallskip 
We now prove Theorems \ref{thm:topconjugacy} and \ref{thm:regularity}, respectively in \S\ref{sc:proof_topconjugacy} and \S~\ref{sc:singularity}. 

\section{Existence of a topological conjugacy}\label{sec:topconj}

In this section, we prove Theorem \ref{thm:topconjugacy}, by showing the existence of a topological conjugacy between an AIET and IET having the same combinatorial rotation number, under the BC condition (see Definition \ref{def:BC_condition}) on the IET. 

\subsection{Wandering intervals and Birkhoff sums}

Let us recall that for $T$ and $T_0$ as in Theorem \ref{thm:topconjugacy}, and under a full measure condition on $T_0$, the assumption $\gamma(T)=\gamma(T_0)$ automatically yields a semi-conjugacy $h$ between $T$ and $T_0$. Moreover, this semi-conjugacy is indeed a conjugacy if the map $T$ has no wandering intervals (see Proposition \ref{prop:semi-conjugacy}).

The main criterion we will use to exclude the presence of wandering intervals on a given AIET is stated in Lemma~\ref{lemma:wanderingintervalscriterium}, and it is due to M. Cobo \cite{cobo_piece-wise_2002} (see also \cite{bressaud_persistence_2010}). This criterion reduces the question of the existence of wandering intervals to a study of Birkhoff sums of the log-slope vector of the AIET. Let us first introduce some notation. 

\noindent Let $f:[0,1]\to \mathbb{R}$ be a real-valued function and let $T$ be an AIET. For each $n\in \mathbb{Z}$, we define the $n$-th \emph{Birkhoff sum} of $f$ over $T$ by
\begin{equation}\label{def:BS}
S_n^Tf:= 
\begin{cases} \sum_{j=0}^{n-1}f \circ T^j , & \text{if}\ n>0,\\
0 , & \text{if}\ n=0,\\
\sum_{j=n}^{-1}f \circ T^{-j} , & \text{if}\ n<0.\\
\end{cases}
\end{equation}
If there is no risk of confusion concerning the transformation being considered, we will denote the Birkhoff sums simply by $S_n f$. The definition of the Birkhoff sums $S_nf$, for $n\leq 0$, is given so that $(S_nf)_{n\in\mathbb{Z}}$ is a $\mathbb{Z}$-additive cocycle, i.e. satisfies 

\begin{equation}\label{eq:cocyclerel}
S_{n+m}\, f = S_n\, f + S_m f\circ T^n,\qquad \text{ for\ all \ } n,m \in \mathbb{Z}.
\end{equation}

The space of piecewise-constant real-valued functions, which are continuous on each of the continuity intervals of $T$, can be identified with a vector in $\R^\A$. Indeed, given a vector $\omega \in \R^\A$, we associate the piecewise constant function $f_{T,\omega} : = I \to \R$ given by
\begin{equation}\label{def:fv}
f_{T,\omega} (x):= \omega_\alpha, \qquad \text{if}\quad x\in I_\alpha \text{ for some }\alpha \in \A.
\end{equation}
We will also write simply $f_\omega $ instead than $f_{T,\omega}$ when the dependence on $T$ is clear. 
In view of the following criterium, the existence of wandering intervals for an AIET with log-slope vector $\omega$ can be reduced to the study of Birkhoff sums of the function $f_\omega$.

\begin{lemma}[Wandering intervals via Birkhoff sums, \cite{cobo_piece-wise_2002}]\label{lemma:wanderingintervalscriterium}
An AIET $T$ with log-slope vector $\omega$ has a wandering interval if and only if there exists a point $x_0\in [0,1]$ such that
$$
\sum_{n\geq 1} e^{S_{n} f_\omega(x_0)}<+\infty, \qquad \sum_{n\leq 0} e^{S_{n} f_\omega(x_0)}<+\infty.
$$
\end{lemma}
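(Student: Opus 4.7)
The cornerstone of the argument is the chain-rule identity
\[
D(T^n)(x)\;=\;e^{S_n f_\omega(x)},
\]
which is valid at every point $x$ at which $T^n$ is differentiable, since $T$ is piecewise affine with slope $e^{\omega_\alpha}$ on $I_\alpha$ (so $\log DT = f_\omega$ off the discontinuities). In particular, whenever $T^n|_U$ is continuous, hence affine, on an open interval $U$, one has $|T^n(U)| = |U|\,e^{S_n f_\omega(x)}$ for every $x \in U$, and the analogous formula holds for $n<0$ by the cocycle relation \eqref{eq:cocyclerel}.

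\emph{Necessity.} Suppose $J$ is a wandering interval. The set of $y \in J$ at which some iterate $T^n$ $(n\in\Z)$ is discontinuous is countable (discontinuities of $T^n$ are preimages under $T^k$, $0\leq k<n$, of the finite set $\mathrm{Disc}(T)$), so we may pick $x_0 \in J$ off this set. Let $J' \subseteq J$ denote the maximal open sub-interval containing $x_0$ on which every $T^n$ is continuous; $J'$ is non-degenerate. The iterates $T^n(J') \subseteq T^n(J)$ are pairwise disjoint inside $[0,1]$, so
\[
|J'|\sum_{n\in\Z} e^{S_n f_\omega(x_0)}\;=\;\sum_{n\in\Z}|T^n(J')|\;\leq\;1,
\]
and in particular both series converge.

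\emph{Sufficiency.} Suppose $x_0$ satisfies both summability conditions, and let $M$ denote the total sum. After replacing $x_0$ by a small perturbation if necessary (using that $f_\omega$ is locally constant on each continuity level of every iterate so that the hypothesis is preserved), assume $x_0$ lies off the grand orbit of $\mathrm{Disc}(T)$. Define $J_n \ni x_0$ as the maximal open interval on which every $T^k$ $(|k|\leq n)$ is continuous; then $(J_n)$ is a decreasing family of non-degenerate intervals, and each $T^k$ $(|k|\leq n)$ is affine on $J_n$ with slope $e^{S_k f_\omega(x_0)}$. The quantitative heart of the proof is to establish $\delta := \inf_n |J_n| > 0$. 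Each transition $J_n \supsetneq J_{n+1}$ is caused by a discontinuity $d_\star$ of $T$ being approached by some $T^k(x_0)$, $|k|\leq n+1$; pulling back by the affine map $T^k$, the corresponding shrinkage at $x_0$ equals $|d_\star - T^k(x_0)|\cdot e^{-S_k f_\omega(x_0)}$. A packing argument inside $[0,1]$, using that the disjoint orbit of any small neighbourhood has total length bounded by a multiple of $M$, bounds the cumulative shrinkage and yields $\delta > 0$ provided $|J_0|$ is chosen appropriately. Then $J^* := \bigcap_n J_n$ has length $\delta > 0$, every $T^n$ is affine on $J^*$, and
\[
\sum_{n\in\Z}|T^n(J^*)| \;=\; \delta\, M \;<\; \infty.
\]
The iterates $T^n(J^*)$ are open intervals, and they must be pairwise disjoint: any overlap $T^n(J^*) \cap T^m(J^*) \neq \emptyset$ $(n\neq m)$ would, by bijectivity of $T$, give $J^*\cap T^{m-n}(J^*)\neq\emptyset$, iterating which produces an infinite chain of overlapping iterates incompatible with the finiteness above. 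Hence $J^*$ is a wandering interval.

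\emph{Main obstacle.} The sufficiency direction is the technical core, and within it the delicate step is the quantitative control $\inf_n|J_n|>0$: one must balance the rate at which preimages of the finitely many discontinuities of $T$ can cluster toward $x_0$ against the summability $\sum e^{S_n f_\omega(x_0)} < \infty$, which precisely bounds how fast the orbit $T^k(x_0)$ may approach $\mathrm{Disc}(T)$ relative to the local expansion rate $e^{S_k f_\omega(x_0)}$.
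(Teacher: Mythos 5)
The paper does not actually prove this lemma (it is quoted from Cobo and only the ``only if'' direction is used later), so your attempt must stand on its own. Your necessity argument is essentially the right one, but the assertion that $J'$ is non-degenerate does not follow from countability: a countable set of discontinuity points of the iterates could be dense in $J$, in which case the maximal open interval of continuity through $x_0$ degenerates to a point. The correct justification uses the wandering hypothesis itself: since the iterates $T^n(J)$, $n\in\Z$, are pairwise disjoint and $T$ (resp.\ $T^{-1}$) has only finitely many discontinuities, only finitely many iterates of $J$ can meet a discontinuity, hence only finitely many points of $J$ have an orbit hitting one; any complementary non-degenerate subinterval then yields $|J'|\sum_{n\in\Z}e^{S_nf_\omega(x_0)}\le 1$. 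This is a small, repairable omission.

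The sufficiency direction, however, has a genuine gap at precisely the step you defer. The claim $\inf_n|J_n|>0$ does not follow from the summability hypothesis: the shrinkage from $J_n$ to $J_{n+1}$ is governed by $\operatorname{dist}\bigl(T^n(x_0),\operatorname{Disc}(T)\bigr)\cdot e^{-S_nf_\omega(x_0)}$ (and its backward analogue), and $\sum_n e^{S_nf_\omega(x_0)}<\infty$ gives no lower bound on how close $T^n(x_0)$ may come to a discontinuity; if the orbit approaches $\operatorname{Disc}(T)$ faster than $e^{S_nf_\omega(x_0)}$ along a subsequence, then $\bigcap_nJ_n=\{x_0\}$ and the construction produces nothing. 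The ``packing argument'' you invoke to close this is circular: it assumes the orbit of a neighbourhood of $x_0$ is pairwise disjoint with total length $O(M)$, which is exactly the wandering property to be established. The final disjointness step is also not a contradiction as written: an infinite chain of consecutively overlapping intervals with summable lengths is perfectly consistent (the chain merely accumulates at a point); ruling it out requires the aperiodicity/minimality of the underlying IET model. The standard proof of this implication (Cobo; see also Camelier--Gutierrez and Marmi--Moussa--Yoccoz) does not shrink intervals around $x_0$ but works through the semi-conjugacy $h$ to the minimal IET $T_0$, realizing the wandering interval as a non-degenerate fibre of $h$ obtained from a Denjoy-type blow-up of the orbit of $h(x_0)$ with gaps of length proportional to $e^{S_nf_\omega(x_0)}$; this is the missing idea.
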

The proof of this lemma can be found in \cite[pp. 392-393]{cobo_piece-wise_2002}. This criterion has also been used in \cite{bressaud_persistence_2010}, and \cite{marmi_affine_2010}. The result we prove in order to exploit this Lemma is the following.

\begin{proposition}\label{prop:boundedseq}
Let $T_0$ be an IET satisfying the BC condition and let $\omega \in E_{cs}(T_0)$. Then, for any AIET $T \in \textup{Aff}(\gamma(T_0), \omega)$ and for any $x\in [0,1]$ there exists a sequence $(m_k)_{k\in \mathbb{Z}}$ with $|m_k|\to \infty$ as $|k|\to \infty$ such that 
\begin{equation}\label{eq:boundedBS}
\sup_{k\in \mathbb{Z}}|S_{m_k}^T f_{T,\omega}(x)| < +\infty.
\end{equation}
\end{proposition}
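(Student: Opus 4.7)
My plan is to construct $(m_k)_{k\in\mathbb Z}$ as a bi--infinite sequence of visit times of the $T$--orbit of $x$ to the induction interval $I^{(n_k)}$, chosen so that the log--derivatives $S^T_{m_k}f_\omega(x)=\log(T^{m_k})'(x)$ stay uniformly bounded in $k$. The BC condition will provide the essential a priori control: (ii) gives $\|\omega^{(n_k)}\|_\infty\le V\|\omega\|=:C_1$, and combining (i) with \eqref{eq:log_slopes_height_cocycle} also yields $\|\omega^{(n_k+N)}\|_\infty\le dKC_1$. Recall that by the very definition of the height cocycle, $S^T_{q^{(n)}_\alpha}f_\omega(z)=\omega^{(n)}_\alpha$ for $z\in I^{(n)}_\alpha$, so these vectors encode the Birkhoff sums of $f_\omega$ over full tower columns.

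For each $k$, I would locate $x$ in the dynamical partition $\mathcal P^{(n_k+N)}$ of $T$: write $x=T^{j_k}y_k$ with $y_k\in I^{(n_k+N)}_{\alpha_k}$ and $0\le j_k<q^{(n_k+N)}_{\alpha_k}$. By BC(i), the column $\mathcal P^{(n_k+N)}_{\alpha_k}$ is made by cutting--and--stacking at most $dK$ subtowers of $\mathcal P^{(n_k)}$, so the forward orbit of $y_k$ meets $I^{(n_k)}$ at times $0=\tau^k_0<\tau^k_1<\cdots<\tau^k_{M_k}=q^{(n_k+N)}_{\alpha_k}$ with $M_k\le dK$ and $\tau^k_{l+1}-\tau^k_l=q^{(n_k)}_{\beta^k_l}$; the cocycle relation \eqref{eq:cocyclerel} together with \eqref{eq:log_slopes_height_cocycle} at level $n_k$ gives $S^T_{\tau^k_l}f_\omega(y_k)=\sum_{l'<l}\omega^{(n_k)}_{\beta^k_{l'}}$, uniformly bounded by $dKC_1$. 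Iterating this description across successive traversals of level-$(n_k+N)$ columns in both positive and negative times produces a bi--infinite sequence of visits $(t^k_l)_{l\in\mathbb Z}$ of the orbit of $x$ to $I^{(n_k)}$ along which the Birkhoff sums form a bounded--step walk
\[
S^T_{t^k_{l+1}}f_\omega(x)-S^T_{t^k_l}f_\omega(x)=\omega^{(n_k)}_{\gamma^k_l},\qquad |\omega^{(n_k)}_{\gamma^k_l}|\le C_1,
\]
whose increments have zero mean with respect to the natural invariant probability of the first--return of $T$ to $I^{(n_k)}$, since $\langle\lambda^{(n_k)},\omega^{(n_k)}\rangle=\langle\lambda,\omega\rangle=0$ by the duality noted in \eqref{rk:centralinclusion} and by Proposition~\ref{prop:non_empty_affine}.

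The crucial step is to show this walk is recurrent: for some $C=C(x)<\infty$ and for each $k$ there exist $l^+_k\to+\infty$ and $l^-_k\to-\infty$ with $|S^T_{t^k_{l^\pm_k}}f_\omega(x)|\le C$. Given this, I would set $m_k:=t^k_{l^+_k}$ for $k>0$, $m_k:=t^{-k}_{l^-_{-k}}$ for $k<0$ and $m_0:=0$; the condition $|I^{(n_k)}|\to 0$ forces $q^{(n_k)}_\alpha\to\infty$, so $|m_k|\to\infty$ as $|k|\to\infty$, and by construction $\sup_{k}|S^T_{m_k}f_\omega(x)|\le C$. The hard part is the recurrence claim itself, since a deterministic bounded--step mean--zero walk need not be bounded in general. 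I expect to exploit the two--scale structure imposed by BC: inside a single traversal of a level-$(n_k+N)$ column the walk already stays within an interval of width at most $dKC_1$ around its starting value, so the \emph{coarse} walk sampled at successive visits of $x$ to $I^{(n_k+N)}$ advances by the bounded increments $\omega^{(n_k+N)}_\alpha$, which are again mean--zero (against the first--return invariant distribution) and hence cannot drift; a discrete intermediate--value argument exploiting the bounded step size should then deliver the required uniform constant $C$. A version of this strategy was carried out by M.~Cobo \cite{cobo_piece-wise_2002} in the periodic--type setting, and the BC condition is designed exactly so that the same reasoning goes through along the subsequence $(n_k)$ in the almost--every case.
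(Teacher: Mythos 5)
Your setup (sampling the orbit at visits to $I^{(n_k)}$, increments given by $\omega^{(n_k)}_{\gamma}$, bounded via BC(ii)) matches the paper's, but the step you yourself identify as ``the hard part'' --- recurrence of the resulting bounded-step walk --- is a genuine gap, and it is precisely the point where the paper's argument does something different and much simpler. Your justification that the coarse walk ``cannot drift'' because its increments are mean-zero does not hold: the partial sums $\sum_{l\le L}\omega^{(n_k+N)}_{\alpha_l}$ are themselves Birkhoff sums of a mean-zero piecewise-constant function over the renormalized IET, i.e.\ exactly the object you are trying to control one level up, so the argument is circular; and for $\omega\in E_{cs}\setminus E_s$ the full sequence $S_nf_\omega(x)$ is in general \emph{unbounded} (this is why the conjugacy ends up singular), so no intermediate-value or anti-drift principle can give boundedness along \emph{all} visit times. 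The paper sidesteps recurrence entirely: by the positivity in BC(i), the level-$(n_k+N)$ column adjacent to the one containing $x$ already contains a subtower of $Z^{(n_k)}_\alpha$, hence a floor lying in $I^{(n_k)}_\alpha$; stopping there, the number of level-$n_k$ special Birkhoff sums traversed is at most $2\|\ZC{n_k}{n_k+N}\|\le 2K$, so only finitely many (uniformly in $k$) bounded increments ever occur and no long-time control of the walk is needed. In other words, BC(i) is used not merely to bound the number of subtowers per column, as in your sketch, but to \emph{guarantee a return to the same symbol $\alpha$ within one adjacent column}.

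A second, smaller gap: your candidate times $m_k=t^k_{l^+_k}$ are visit times to $I^{(n_k)}$, so $S_{m_k}f_\omega(x)$ carries the uncontrolled initial partial column sum $S_{q^{(n_k)}_\alpha-i}f_\omega(x)$ (the climb from the floor containing $x$ to the top of $Z^{(n_k)}_\alpha$), which is not of the form $\omega^{(n_k)}_\gamma$ and is not bounded by the BC condition. The paper chooses $m_k$ so that $T^{m_k}(x)$ lies in the \emph{same floor} $T^i(I^{(n_k)}_\alpha)$ as $x$; since $f_\omega$ is constant on floors, the initial and final partial segments then combine into one full special Birkhoff sum $\omega^{(n_k)}_\alpha$, and the whole sum is bounded by $(2K+1)V\|\omega\|$. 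Both fixes are needed to close your argument, and together they essentially reconstitute the paper's proof.
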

In particular, this Proposition gives a result for Birkhoff sums over an IET $T_0$ of a piecewise constant function $f_{T_0,\omega } $ associated to a vector $\omega\in E_{cs}(T_0) $, which we believe is of independent interest, namely:
\begin{corollary}\label{cor:boundedseq}
For $T_0$ and $\omega$ as in Proposition~\ref{prop:boundedseq},  for any $x\in [0,1]$ there exists a sequence $(m_k)_{k\in \mathbb{Z}}$ with $|m_k|\to \infty$ as $|k|\to \infty$ such that 
$$
\sup_{k\in \mathbb{Z}}|S_{m_k}^{T_0} f_{T_0,\omega}(x)| < +\infty.
$$
\end{corollary}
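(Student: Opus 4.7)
The plan is to deduce Corollary~\ref{cor:boundedseq} directly from Proposition~\ref{prop:boundedseq} by transporting bounded Birkhoff sums across a semi-conjugacy between an appropriate AIET and $T_0$. The first step is to produce such an AIET. Since $T_0$ satisfies the BC condition it is, by definition, Oseledets generic and its combinatorial rotation number is $\infty$-complete. The inclusion \eqref{rk:centralinclusion} then guarantees that $\omega \in E_{cs}(T_0) \subset \lambda^\bot$, so $\langle \omega,\lambda\rangle=0$ and Proposition~\ref{prop:non_empty_affine} yields $\textup{Aff}(\gamma(T_0),\omega)\neq\emptyset$. I fix any $T \in \textup{Aff}(\gamma(T_0),\omega)$.

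By Proposition~\ref{prop:semi-conjugacy}, there is an increasing surjective map $h:[0,1)\to [0,1)$ with $T_0\circ h=h\circ T$. Given $x\in[0,1)$ as in the statement, I choose $\tilde x\in h^{-1}(x)$, which is non-empty by surjectivity of $h$. Because $h$ is monotone and sends the continuity partition of $T$ onto that of $T_0$ while preserving the alphabetic labelling, the symbolic codings of the orbits of $\tilde x$ under $T$ and of $x$ under $T_0$ coincide: for every $n\in\mathbb Z$ and $\alpha\in\A$, one has $T^n(\tilde x)\in I_\alpha(T)$ if and only if $T_0^n(x)\in I_\alpha(T_0)$. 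Consequently $f_{T,\omega}(T^n\tilde x)=f_{T_0,\omega}(T_0^n x)$ for every $n\in\mathbb Z$, and summing over $n$ yields
\[ S_m^{T_0} f_{T_0,\omega}(x) \;=\; S_m^T f_{T,\omega}(\tilde x), \qquad m\in\mathbb Z. \]

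Proposition~\ref{prop:boundedseq} applied to $T$ at the point $\tilde x$ then produces a sequence $(m_k)_{k\in\mathbb Z}$ with $|m_k|\to\infty$ along which $S_{m_k}^T f_{T,\omega}(\tilde x)$ is uniformly bounded; by the identity just displayed, the same sequence works for $S_{m_k}^{T_0} f_{T_0,\omega}(x)$, which establishes the corollary. The only subtlety I expect to require care is the preservation of the symbolic coding at the at most countably many $n\in\mathbb Z$ for which $T_0^n(x)$ (or $T^n(\tilde x)$) lies on a partition boundary. This will be dealt with by invoking the matching right-continuous convention used to define $T$, $T_0$ and $h$, or, alternatively, by choosing $\tilde x\in h^{-1}(x)$ so that $T^n(\tilde x)$ lies on the same side of the corresponding boundary in the partition of $T$; in both cases the substance of the argument is unaffected.
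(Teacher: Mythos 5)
Your proposal is correct and follows essentially the same route as the paper: construct $T\in\textup{Aff}(\gamma(T_0),\omega)$ via \eqref{rk:centralinclusion} and Proposition~\ref{prop:non_empty_affine}, take the semi-conjugacy $h$ from Proposition~\ref{prop:semi-conjugacy}, observe that $f_{T_0,\omega}\circ h=f_{T,\omega}$ because $h$ maps continuity intervals onto continuity intervals, and transport the bounded Birkhoff sums of Proposition~\ref{prop:boundedseq} through the identity $S_m^{T_0}f_{T_0,\omega}(h(\tilde x))=S_m^{T}f_{T,\omega}(\tilde x)$. The only cosmetic difference is that you pick a preimage $\tilde x\in h^{-1}(x)$ whereas the paper states the identity at $h(x)$ and invokes surjectivity; your closing remark about partition boundaries is handled in the paper by the left-closed, right-open convention, exactly as you anticipate.
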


The proof of Proposition \ref{prop:boundedseq} exploits the decomposition of Birkhoff sums into special Birkhoff sums, which are building blocks controlled by the cocycle matrices produced by Rauzy-Veech induction. In \S~\ref{sec:sums}, we recall their definition and some basic properties.

Before proving Proposition \ref{prop:boundedseq}, whose proof we postpone to \S\ref{sec:proofboundedseq}, let us show how to use it to prove Theorem \ref{thm:topconjugacy}. 

\subsection{{Proof of absence of wandering intervals}}
\label{sc:proof_topconjugacy}
In this section, we prove Theorem~\ref{thm:topconjugacy} by showing the absence of wandering intervals under the theorem's assumptions. 

\begin{proof}[Proof of Theorem~\ref{thm:topconjugacy}]
Since $\gamma(T)=\gamma(T_0)$, by Proposition \ref{prop:semi-conjugacy}, there exists a semi-conjugacy, i.e.~a surjective continous increasing function $h:[0, 1] \to [0,1]$ such that $h\circ T=T_0\circ h$. To show that $h$ is a homeomorphism and, therefore, a topological conjugacy, it is enough to show that it has no wandering intervals. This follows by Lemma~\ref{lemma:wanderingintervalscriterium}, since, by Proposition~\ref{prop:boundedseq}, for any $x\in [0,1]$, there exists $C>0$ and $(m_k)_{k\in \mathbb{Z}}$ such that 
\[\sup_{k\in \mathbb{Z}}|S_{m_k}^T f_\omega(x)| < C.\] Hence
$$
\sum_{n\geq 1}e^{S_{n}^T f_\omega(x_0)} \geq \sum_{k\geq 1} e^{S_{m_k}^T f_\omega(x_0)} \geq \sum_{k\geq 1} e^{-C} =+\infty. 
$$
Similarly, $\sum_{n\leq 0} S_n^T f_\omega(x_0) =+\infty$. Therefore, Lemma \ref{lemma:wanderingintervalscriterium} implies that $T$ has no wandering intervals.
\end{proof}

\subsection{Birkhoff sums and special Birkhoff sums.}\label{sec:sums}

Given an AIET $T$ and a function $f: I \to \R$, the Birkhoff sums $S_n f$, for $n\geq 0$, can be studied via renormalization exploiting the notion of \emph{special Birkhoff sums} that we now recall. For any $n \in \N$, the \emph{special Birkhoff sum of level $n$} is the function $\SBS{n}{f}: I^{(n)} \to \mathbb{R}$ obtained by \emph{inducing} $f$ over the first return map $T^{(n)}$. More precisely, 
\[ \SBS{n}{f}(x):= S_{q^{(n)}_\alpha}f(x) =\sum_{\ell=0}^{q^{(n)}_\alpha - 1} f \left(T^\ell( x)\right), \quad \text{if}\ x\in I^{(n)}_\alpha, \quad \text{for any } \alpha \in \A, \ n\in\mathbb{N}.\]
Thus one can think of $\SBS{n}{f}(x)$ as the Birkhoff sum of $f$ at $x$ \emph{along the Rohlin tower} of height $q^{(n)}_\alpha$ over $I^{(n)}_\alpha$. 
Given the $n$-th special Birkhoff sum $\SBS{n}{f}$, we can build $\SBS{n + 1}{f}$ from $\SBS{n}{f}$ and $T^{(n)}$ as
\begin{equation}\label{SBSdecomp}
\SBS{n + 1}{f} (x) = \sum_{k=0}^{a_\alpha^{(n)}} \SBS{n}{f} \big((T^{(n)})^k(x)\big), \qquad \text{for\ any}\ x\in I^{(n + 1)}_\alpha,
\end{equation}
where $a_\alpha^{(n)} = \sum_{\beta \in \A}(\ZC{n}{n + 1})_{\alpha\beta}$. Equation \eqref{SBSdecomp} follows from the relation between Rohlin towers of the partitions $\mathcal{P}^{(n + 1)}$ and $\mathcal{P}^{(n)}$ described in \S\ref{sc:incidence_matrices}.

Hence, if $f = f_\omega$ for some $\omega \in \R^A$, it follows from the definition of special Birkhoff sums that 
\begin{equation}
\label{SBS_cocycle_relation}
\SBS{n}{f_\omega}(x) = \ZC{0}{n}\omega.
\end{equation}

\subsection{Proof of {  the Birkhoff sums upper bound (Proposition~\ref{prop:boundedseq})}}
\label{sec:proofboundedseq}
We are now ready to prove Proposition~\ref{prop:boundedseq}. The argument behind the proof provides a generalization of the key idea exploited in \cite{bressaud_deviation_2014} to prove the analogous result in the special case of periodic combinatorial rotation numbers. While the proof in \cite{bressaud_deviation_2014} is written in the language of substitutions and prefix-suffix decompositions, our proof below uses simply the decomposition of Birkhoff sums in special Birkhoff sums and the BC condition. The key idea is that for \emph{any} point $x\in [0,1]$ it is possible to find times $(m_k)_{k\in \mathbb{Z}}$ such that $S_{m_k} f_\omega (x)$ can be decomposed into a bounded number of special Birkhoff sums. 

\begin{proof}[Proof of Proposition~\ref{prop:boundedseq}]
Let $(n_k)_{k\in \mathbb{N}}$ be the sequence of induction times given by the BC condition (see Definition \ref{def:BC_condition}). For the sake of simplicity and clarity of exposition, let us denote
\[T_k = T^{(n_k)}, \quad {I}^k:=I^{(n_k)}, \quad {Z}^k_\alpha:={Z}^{(n_k)}_\alpha, \quad {q}^k_\alpha:={q}^{(n_k)}_\alpha, \quad \mathcal{P}^k = \mathcal{P}^{(n_k)},\]
\[{I}^{k_+}:=I^{(n_k + N)}, \quad {Z}^{k_+}_\alpha:={Z}^{(n_k + N)}_\alpha, \quad {q}^{k_+}_\alpha:={q}^{(n_k + N)}_\alpha, \quad \mathcal{P}^{k_+} = \mathcal{P}^{(n_k + N)},\]
for any $k \in \N$ and $\alpha \in \A$, where $N$ is given by the BC condition. Recall that, for any $n \in \N$, $I^{(n)}$ stands for the inducing subinterval of the $n$-th step of Rauzy-Veech induction while ${Z}^{(n)}_\alpha$, $q^{(n)}_\alpha$ and $\mathcal{P}^{(n)}$ denote the corresponding dynamical Rohlin towers, their heights, and the associated dynamical partition, respectively.

Let $x \in [0, 1]$ be fixed. We will define a sequence $(m_{k})_{k \in \Z}$ such that \eqref{eq:boundedBS} holds. Fix $k \in \N$. Let $\alpha, \beta\in \mathcal{A}$ be such that $x$ belongs to the towers $Z^k_\alpha$ and $Z^{k_+}_\beta$, respectively, and let $0\leq i< q^{k}_\alpha$ and $0\leq j< q^{k_+}_\beta $ be the indexes of the floors or $Z^k_\alpha$ and $Z^{k_+}_\beta$ respectively which contains $x$, i.e.~such that
$$
x\in T^{i} (I^k_{\alpha}), \qquad x\in T^{j} (I^{k_+}_{\beta}). 
$$
Let also $\beta^-$ and $\beta^+$ the the indexes of the dynamical towers of the partition $\mathcal{P}^{k_+}$ \emph{before} and \emph{after} $Z^{k_+}_\beta$ which contain the orbit of $x$, namely such that
\begin{equation}\label{eq:towerspm}
T^{-j-1} (x) \in Z^{k_+}_{\beta^-}, \qquad T^{q^{k_+}_\beta-j} (x) \in Z^{k_+}_{\beta^+}.
\end{equation}
Since by the BC condition $\ZC{n_k}{n_{k+1}}$ is a positive matrix, each tower $Z^{k_+}_{\beta}$ of level $k+1$ is obtained stacking at least once each the previous level $Z^{k}_{\alpha}$, $\alpha\in \mathcal{A}$. In particular, there exists a floor $F_-$ of $Z^{k_+}_{\beta^-}$ and a floor $F_+$ of $Z^{k_+}_{\beta^+}$ which belong to $I^k_{\alpha}$. Since the (full) orbit of $x$ under $T$ visits both $Z^{k_+}_{\beta_-}$ and $Z^{k_+}_{\beta_+}$ by definition of $\beta_\pm$, see \eqref{eq:towerspm}, it visits every floor of both, so there exists $j_-,j_+\geq 0$ such that
\begin{equation}\label{eq:jchoice}
T^{-j_-}(x) \in F_-\subseteq I^{k}_{\alpha}\cap Z^{k_+}_{\beta^-}, \qquad T^{j_+}(x) \in F_+ \subseteq I^{k}_{\alpha} \cap Z^{k_+}_{\beta^+}.
\end{equation}

We can now define $m_{-k}$ and $m_k$ as
\begin{equation}\label{eq:nkdef}
m_{-k}:=-j_-+i, \qquad m_{k}:=j_+ + i.
\end{equation} 
Notice that, by construction, in view of \eqref{eq:jchoice} and \eqref{eq:nkdef}, $T^{m_{-k}}(x)$ and $T^{m_k}(x)$ both belong, as $x$, to the $i$-th floor of the tower $Z^{k}_{\alpha}$. 

Let us illustrate the notations in the following picture.

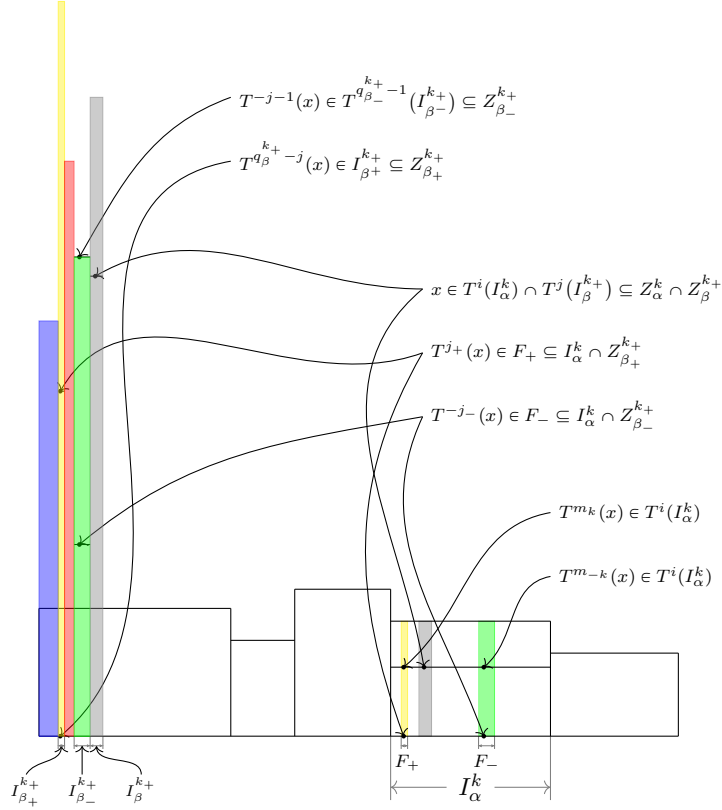
\begin{figure}[h!]
\centering
\begin{tikzpicture}[scale=0.85, transform shape]
\def\a{2}
\def\b{1.5}
\def\c{2.3}
\def\d{1.8}
\def\e{1.3}
\def\A{3}
\def\B{4}
\def\C{5.5}
\def\D{8}
\def\E{10}

\draw [yellow, fill=yellow, opacity=0.4] (\C * 1.03,0) rectangle (\C*1.03 + \B*0.1 - \A*0.1,\d);
\draw [gray, fill=gray, opacity=0.4] (\C * 1.08,0) rectangle (\C*1.08 + \E*0.1 - \D*0.1,\d);
\draw [green, fill=green, opacity=0.4] (\C * 1.25,0) rectangle (\C*1.25 + \D*0.1 - \C*0.1,\d);

\node[below] at (-0.2,-0.5) {\textcolor{black}{\tiny$I^{k_+}_{\beta_+}$}};
\draw[<-,thin] (\A*0.05 + \B * 0.05, -0.15) to [out=270,in=60] (-0.2,-0.6) {};
\draw[-, gray] (\A*0.1,-0.2) -- (\A*0.1,0)  {};
\draw[-, gray] (\B*0.1,-0.2) -- (\B*0.1,0)  {};
\draw[{Latex[scale=0.3]}-{Latex[scale=0.3]},line width=0.05mm, gray]  (\A*0.1,-0.15)--(\B*0.1,-0.15);

\node[ below] at (0.7,-0.5) {\textcolor{black}{\tiny$I^{k_+}_{\beta_-}$}};
\draw[<-,thin] (\C*0.05 + \D * 0.05, -0.15) to [out=270,in=90] (\C*0.05 + \D * 0.05, -0.6)  {};
\draw[-, gray] (\C*0.1,-0.2) -- (\C*0.1,0)  {};
\draw[{Latex[scale=0.3]}-{Latex[scale=0.3]},line width=0.05mm, gray]  (\C*0.1,-0.15)--(\D*0.1,-0.15);

\node[ below] at (1.6,-0.5) {\textcolor{black}{\tiny$I^{k_+}_\beta$}};
\draw[<-,thin] (\D*0.05 + \E * 0.05, -0.15) to [out=270,in=90] (1.6,-0.6)  {};
\draw[-, gray] (\D*0.1,-0.2) -- (\D*0.1,0)  {};
\draw[-, gray] (\E*0.1,-0.2) -- (\E*0.1,0)  {};
\draw[{Latex[scale=0.3]}-{Latex[scale=0.3]},line width=0.05mm, gray]  (\D*0.1,-0.15)--(\E*0.1,-0.15);

\node[] at (\C*0.5 + \D*0.5,-0.8) {\textcolor{black}{$I^k_\alpha$}};
\draw[-,gray] (\C, -1) -- (\C, 0);
\draw[-,gray] (\D, -1) -- (\D, 0);
\draw[<-,gray] (\C, -0.8) -- (\C + \D*0.35 - \C*0.35, -0.8);
\draw[<-,gray] (\D, -0.8) -- (\D + \C*0.35 - \D*0.35, -0.8);
\draw[-, gray] (\C*1.03,-0.2) -- (\C*1.03,0)  {};
\draw[-, gray] (\C*1.03 + \B*0.1 - \A*0.1,-0.2) -- (\C*1.03 + \B*0.1 - \A*0.1,0)  {};
\draw[{Latex[scale=0.3]}-{Latex[scale=0.3]},line width=0.05mm, gray]  (\C*1.03,-0.15)--(\C*1.03 + \B*0.1 - \A*0.1,-0.15);
\node[below] at (\C*1.03 + \B*0.05 - \A*0.05 +0.05,-0.15) {\textcolor{black}{\scriptsize$F_+$}};
\draw[-, gray] (\C * 1.25,-0.2) -- (\C * 1.25,0)  {};
\draw[-, gray] (\C*1.25 + \D*0.1 - \C*0.1,-0.2) -- (\C*1.25 + \D*0.1 - \C*0.1,0)  {};
\draw[{Latex[scale=0.3]}-{Latex[scale=0.3]},line width=0.05mm, gray]  (\C*1.25,-0.15)--(\C*1.25 + \D*0.1 - \C*0.1,-0.15);
\node[below] at (\C*1.25 + \D*0.05 - \C*0.05,-0.15) {\textcolor{black}{\scriptsize$F_-$}};


\node[circle, fill, scale=0.2, label=below:] at (\A * 0.1 + \C * 0.006, 0) {}; 
\node[circle, fill, scale=0.2, label=below:] at (\C * 0.115 , \b*5) {}; 
\node[circle, fill, scale=0.2, label=below:] at (\D * 0.11, \d*4) {}; 

\node[circle, fill, scale=0.2, label=below:] at (\C * 1.095, \d*0.6) {}; 
\node[circle, fill, scale=0.2, label=below:] at (\C * 1.037, 0) {}; 
\node[circle, fill, scale=0.2, label=below:] at (\C * 1.265, 0) {}; 


\draw[<-,thin] (\A * 0.1 + \C * 0.006, 0) to [out=40,in=190] (3,9) {};
\node[ right] at (3,9) {\textcolor{black}{\scriptsize$T^{q^{k_+}_\beta-j}(x) \in I_{\beta^+}^{k_+} \subseteq Z_{\beta_+}^{k_+}$}};

\draw[<-,thin] (\C * 0.115 , \b*5) to [out=60,in=200] (3,10) {};
\node[ right] at (3,10) {\textcolor{black}{\scriptsize$T^{-j -1}(x) \in T^{q^{k_+}_{\beta_-}-1}\big(I_{\beta^-}^{k_+}\big) \subseteq Z_{\beta_-}^{k_+}$}};

\draw[<-,thin] (\D * 0.11, \d*4) to [out=30,in=180] (6,7) {};

\draw[<-,thin] (\C * 1.037, 0) to [out=110,in=240] (6,6) {};
\node[ right] at (6,6) {\textcolor{black}{\scriptsize$T^{j_+}(x) \in F_+ \subseteq I^k_\alpha \cap Z_{\beta_+}^{k_+}$}};

\draw[<-,thin] (\C * 1.095, \d*0.6) to [out=100,in=220] (6,7) {};
\node[ right] at (6,7) {\textcolor{black}{\scriptsize$x \in T^i(I_\alpha^k) \cap T^j\big(I_\beta^{k_+}\big) \subseteq Z^k_\alpha \cap Z_{\beta}^{k_+}$}};

\draw[<-,thin] (\C * 1.265, 0) to [out=110,in=245] (6,5) {};
\node[ right] at (6,5) {\textcolor{black}{\scriptsize$T^{-j_-}(x) \in F_- \subseteq I^k_\alpha \cap Z_{\beta_-}^{k_+}$}};


\draw[<-,thin] (\A * 0.1 + \C * 0.006, \d*3) to [out=60,in=200] (6,6) {};
\node[circle, fill, scale=0.2, label=below:] at (\A * 0.1 + \C * 0.006, \d*3) {};
\draw[-,thin] (\A*0.1, \d*3) -- (\B*0.1, \d*3) {};

\draw[<-,thin] (\C * 0.115, \b*2) to [out=40,in=190] (6,5) {};
\node[circle, fill, scale=0.2, label=below:] at (\C * 0.115, \b*2) {};
\draw[-,thin] (\C*0.1, \b*2) -- (\D*0.1, \b*2) {};

\draw[-,thin] (\D*0.1, \d*4) -- (\E*0.1, \d*4) {}; 
\draw[-,thin] (\C*0.1, \b*5) -- (\D*0.1, \b*5) {};

\draw[-,thin] (\C, \d*0.6) -- (\D, \d*0.6) {}; 
\node[circle, fill, scale=0.2, label=below:] at (\C * 1.037, \d*0.6) {};
\draw[<-,thin] (\C * 1.037, \d*0.6) to [out=40,in=190] (8,3.5) {};
\node[ right] at (8,3.5) {\textcolor{black}{\scriptsize$T^{m_k}(x) \in T^i(I_\alpha^k)$}};

\node[circle, fill, scale=0.2, label=below:] at (\C * 1.265, \d*0.6) {};
\draw[<-,thin] (\C * 1.265, \d*0.6) to [out=40,in=190] (8,2.5) {};
\node[ right] at (8,2.5) {\textcolor{black}{\scriptsize$T^{m_{-k}}(x) \in T^i(I_\alpha^k)$}};

\draw[-] (0,0) -- (10,0) node[right] {}; 
\draw[-] (0,0) -- (0,\a) node[right] {}; 
\draw[-] (\A,0) -- (\A,\a) node[right] {}; 
\draw[-] (\B,0) -- (\B,\c) node[right] {}; 
\draw[-] (\C, 0) -- (\C,\c) node[right] {}; 
\draw[-] (\D, 0) -- (\D,\d) node[right] {}; 
\draw[-] (\E, 0) -- (\E,\e) node[right] {}; 

\foreach \height/\base/\baseend in {\a/0/\A,\b/\A/\B,\c/\B/\C,\d/\C/\D,\e/\D/\E}{
\draw[-] (\base, \height) -- (\baseend,\height) node[right] {}; }

\foreach \col/\height/\base/\baseend in {blue/\e/0/\A,yellow/\c/\A/\B,red/\d/\B/\C,green/\b/\C/\D,gray/\a/\D/\E}{\draw [\col, fill=\col, opacity=0.4] (\base*0.1,0) rectangle (\baseend*0.1,\height*5); }
\end{tikzpicture}
\caption{\small The Rohlin towers at the $n_k$-th and $(n_k + N)$-th steps of induction are represented by white and colored rectangles, respectively.}
\end{figure}

Let us show that with this definition, the sequence $(m_k)_{k \in \Z}$ satisfies \eqref{eq:boundedBS}. We will only prove that
\[ \sup_{k\in \N} |S_{m_k} f_\omega(x)| < +\infty,\] 
since the proof for the sequence $m_{-k}$ is completely analogous. Indeed, it suffices to consider the backward orbit of $x$ instead of the forward one. 

We start by showing that we can decompose the Birkhoff sum $S_{m_k} f_\omega (x)$ into a sum of special Birkhoff sums of level $n_k$ plus an initial and final segment. More precisely, we will show that we can express $S_{m_k} f_\omega (x)$ as 
\begin{equation}\label{eq:BSdecompnk}
\begin{split}
S_{m_k} f_\omega (x ) = S_{q^k_\alpha -i}\, f_\omega (x) + & \sum_{0\leq \ell \leq \ell_0} \SBS{n_k}{f_\omega} (T_k^\ell (y)) + S_{i} f_\omega (z), 
\end{split}
\end{equation}
for some $\ell_0 \in \N$, where $y:= T^{q^k_\alpha -i}(x)$ and $ z:= T_k^{m_k - i} (x)$.

Let us prove \eqref{eq:BSdecompnk}. By definition of $y$ and $z$, it follows that
\begin{equation}
\label{eq:decomp}
\begin{aligned}
 S_{m_k} f_\omega (x ) & = S_{q^k_\alpha - i}\, f_\omega (x) + S_{m_k-q^k_\alpha} f_\omega (y) + S_i f_\omega (z). 
\end{aligned}
\end{equation}
Notice that, since $x$ belongs to the $i$-th floor of the tower $Z^{k}_{\alpha}$, we have $y \in I^k$. Consider now the successive iterates $T_k^\ell (y)$, $\ell\in \mathbb{N}$, which by definition of the induced map $T_k$ all belong to $I^k$, and let $\ell_0$ be the last one (in the natural order of the orbit of $x$ under $T$) before $T^{m_k}(x)$. Notice that $T_k^{\ell_0}(y) = z = T^{m_k - h_\alpha^k}(y)$. For $0\leq \ell \leq \ell_0$, let $\alpha_\ell \in \A$ be such that $T_k^\ell(y) \in I^k_{\alpha_\ell}$. 

Since, for any $0 \leq \ell \leq \ell_0$, we have
\[ T_k(T_k^\ell(y)) = T^{h_{\alpha_{\ell}}^k}(T_k^\ell(y)), \qquad S_{h_{\alpha_{\ell}}^k}{f_\omega}(T_k^\ell(y)) = \SBS{n_k}{f_\omega}(T_k^\ell(y)),\]
it follows from the cocycle relation \eqref{eq:cocyclerel}, that 

\begin{equation}\label{eq:ellbound}
S_{m_k-q^k_\alpha} f_\omega (y) = \sum_{0\leq \ell \leq \ell_0} S_{h_{\alpha_{\ell}}^k}{f_\omega}(T_k^\ell(y)) = \sum_{0\leq \ell \leq \ell_0} \SBS{n_k}{f_\omega}(T_k^\ell(y)),
\end{equation}
which together with \eqref{eq:decomp} conclude the proof of \eqref{eq:BSdecompnk}. 

Notice now that, since $\ell_0$ is bounded by the number of towers of level $n_k$ contained in the union of $Z^{k_+}_{\beta}$ and $Z^{k_+}_{\beta^+}$, by the interpretation of the entries of the incidence matrices (see \S~\ref{sc:incidence_matrices}) and the BC condition, it follows that 
\begin{equation}\label{eq:l0bound}
\ell_0+1 \leq 2 \Vert \ZC{n_k}{n_k + N}\Vert \leq 2 K.
\end{equation}
Moreover, observe that the initial and final sum in \eqref{eq:BSdecompnk} combine into a full special Birkhoff sum since they combine to a sum over exactly one point for each floor of the tower $Z_\alpha^k$, and $f_\omega$ is constant on each floor of $Z_\alpha^k$, so that
\begin{align*}
S_{q^k_\alpha -i}\, f_\omega (x) + S_i f_\omega (z) & = \sum_{m=0}^{q^k_\alpha-i+1} f_\omega (T^m x) + \sum_{m=q^k_\alpha-i}^{q^k_\alpha-1} f_\omega (T^m (T^{-i}z) \\ & = \sum_{n=0}^{q^k_\alpha-1} f_\omega (T^m x) = \SBS{n_k}{f_\omega}(x)= \omega^{(n_k)}_{\alpha}.
\end{align*}
Using these last two observations, we can estimate \eqref{eq:BSdecompnk} with a bounded number of special Birkhoff sums, which in turn, in view of \eqref{eq:log_slopes_height_cocycle}, \eqref{SBS_cocycle_relation}, \eqref{eq:ellbound}, \eqref{eq:l0bound} and the BC condition, yields
\begin{align*}
\| S_{m_k} f_\omega (x ) \| & \leq \left| \omega^{(n_k)}_{\alpha}\right|+ \left| \sum_{0\leq \ell \leq \ell_0} \SBS{n_k}{f_\omega} (T_k^\ell (y)) \right| \\ & = \left| \omega^{(n_k)}_{\alpha}\right| + \left| \sum_{0\leq \ell \leq \ell_0} \omega^{(n_k)}_{\alpha_\ell} \right| \\ 
& \leq (\ell_0+2) \Vert \omega^{(n_k)} \Vert\\
& \leq (2K+1)V\|\omega\|.
\end{align*}
Therefore $ \sup_{k\in \N} |S_{m_k} f_\omega(x)| < +\infty,$ which concludes the proof.
\end{proof}

Let us also show how Corollary~\ref{cor:boundedseq} follows from Proposition~\ref{prop:boundedseq}.

\begin{proof}[Proof of Corollary~\ref{cor:boundedseq}]
Notice that since $T_0$ satisfies the BC condition, in particular, its rotation number $\gamma(T_0)$ is  $\infty$-complete. Furthermore, 
since $\omega\in E_{cs}(T_0)$ and $E_{cs}(T_0) \subset \lambda^\bot$ (see \eqref{rk:centralinclusion}), the assumptions of  Proposition~\ref{prop:non_empty_affine} hold and therefore  there exists an AIET $T$ in $ \textup{Aff}(\gamma(T_0),\omega)$. By 
Proposition \ref{prop:semi-conjugacy}, there also exists a semiconjugacy between $T$ and $T_0$ i.e. an increasing surjective map $h: [0, 1) \to [0, 1)$, satisfying $T_0 \circ h = h \circ T$.  
Notice that $h$ maps continuity intervals of $T$ onto continuity intervals of $T_0$. Therefore, by definition of the functions $f_{T,\omega}$ and $f_{T_0,\omega}$ (see Definition~\ref{def:fv}), 
$f_{T_0, \omega}\circ h= f_{T,\omega}$. Thus, for every $n\in \mathbb{N}$ and any $x\in [0,1]$, 
$$
S_n^{T_0} f_{T_0,\omega} (h(x)) = \sum_{k=0}^{n-1} f_{T_0,\omega} ((T_0)^k\circ h (x))=  \sum_{k=0}^{n-1} f_{T_0,\omega}( h \circ T^k(x)) = S_n^{T} f_{T,\omega} (x) .
$$
Similarly (recalling Definition~\ref{def:BS}), one sees that  $S_n^{T_0} f_{T_0,\omega} \circ h(x) = S_n^{T} f_{T,\omega} (x)$ for any $n\in \mathbb{Z}$.  Thus, the conclusion of   the corollary follows immediately from Proposition~\ref{prop:boundedseq}.
\end{proof}
\section{Singularity of the invariant measure}
\label{sc:singularity}

In this section, we present a direct proof of Theorem \ref{thm:regularity}. We first state a standard analysis Lemma (Lemma~\ref{lem:locally_constant} below), which will be helpful in the proof and roughly says that an integrable function is locally constant when looking at it on a sufficiently small scale. 

Given an integrable function $\psi: [0, 1] \to \R$, for any $x \in [0,1]$ and any $r, \delta > 0$, we denote
\[ E^\psi_r(x, \delta) = \{ y \in [0, 1] \mid |x - y| < r;\, |\psi(x) - \psi(y)| > \delta\}.\]
The set $E^\psi_r(x, \delta)$ is the set of `exceptional points' in the ball of radius $r$ around $x$ for which the value of $\psi$ differs from $\psi(x)$ by more than $\delta$. 

\begin{lemma}
\label{lem:locally_constant}
Let $\psi: [0, 1] \to \R$ be an integrable function and let $\delta > 0$. Given $\epsilon > 0$, there exists $r_0 > 0$ such that
\begin{equation}
\label{eq:measure_good_points}
\textup{Leb}\left(\left\{ x \in [0, 1] \,\left|\, \big| E^\psi_r(x, \delta) \big| < \tfrac{2r\epsilon}{\delta}\, \text{ for all } 0 < r < r_0\right. \right\}\right) > 1 - \epsilon.
\end{equation}
\end{lemma}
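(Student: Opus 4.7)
The plan is to combine Chebyshev's inequality with Lebesgue's differentiation theorem and then promote the resulting almost-everywhere statement to the uniform-in-$r$ statement required by \eqref{eq:measure_good_points}. First, for any $x$ and $r>0$, Chebyshev/Markov applied to $|\psi(\cdot)-\psi(x)|$ over the interval $(x-r,x+r)$ gives
$$
\delta \cdot \big|E^\psi_r(x,\delta)\big| \;\leq\; \int_{x-r}^{x+r}|\psi(y)-\psi(x)|\,dy,
$$
so that
$$
\frac{|E^\psi_r(x,\delta)|}{2r}\;\leq\;\frac{1}{\delta}\cdot\frac{1}{2r}\int_{x-r}^{x+r}|\psi(y)-\psi(x)|\,dy.
$$
Since $\psi \in L^1([0,1])$, Lebesgue's differentiation theorem tells us that the right-hand side tends to $0$ as $r\to 0^+$ for almost every $x\in[0,1]$ (namely at every Lebesgue point of $\psi$). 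Consequently, for such $x$ there exists an $r_0(x)>0$ with $|E^\psi_r(x,\delta)|<2r\epsilon/\delta$ for all $0<r<r_0(x)$. The content of the lemma is that $r_0$ can be chosen uniformly on a set of measure greater than $1-\epsilon$.

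To obtain this uniformity, I would introduce, for each $n\in\N$,
$$
\widetilde{F}_n(x)\;:=\;\sup_{0<r<1/n}\,\frac{|E^\psi_r(x,\delta)|}{2r}.
$$
A quick check shows each $\widetilde{F}_n$ is measurable: for fixed $r$, Fubini applied to the measurable set $\{(x,y):|x-y|<r,\,|\psi(x)-\psi(y)|>\delta\}$ yields measurability of $x\mapsto|E^\psi_r(x,\delta)|$, and the monotonicity and right-continuity of $r\mapsto|E^\psi_r(x,\delta)|$ allow the sup over $r\in(0,1/n)$ to be reduced to a countable sup over $r\in\mathbb{Q}\cap(0,1/n)$. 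The sequence $(\widetilde{F}_n)$ is monotone non-increasing and converges to $0$ on the full-measure set of Lebesgue points. By continuity of measure from above,
$$
\lim_{n\to\infty}\,\big|\{x\in[0,1]\,:\,\widetilde{F}_n(x)>\epsilon/(2\delta)\}\big|\;=\;0,
$$
so we may pick $n_0$ for which this measure is less than $\epsilon$. Setting $r_0:=1/n_0$, every $x$ outside the exceptional set satisfies $|E^\psi_r(x,\delta)|\leq 2r\cdot\widetilde{F}_{n_0}(x)\leq 2r\cdot\epsilon/(2\delta)<2r\epsilon/\delta$ for all $0<r<r_0$, which is exactly \eqref{eq:measure_good_points}.

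The whole argument is standard real analysis once the quantities are set up correctly; the only mildly delicate step is the passage from pointwise convergence to uniformity on a set of near-full measure, which here is handled by the continuity of measure argument above (one could equally well invoke Egorov's theorem applied to $\widetilde{F}_n$, or take a maximal-function route by approximating $\psi$ in $L^1$ by a continuous function and applying the Hardy--Littlewood weak type $(1,1)$ inequality to the resulting bad set). I do not foresee any genuine obstacle beyond bookkeeping of constants.
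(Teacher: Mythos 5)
Your argument is correct and follows exactly the route the paper indicates (the paper only remarks that the lemma is "a simple consequence of the Lebesgue differentiation theorem and Egorov's theorem"): the Chebyshev bound reduces everything to the averaged integrals $\frac{1}{2r}\int_{x-r}^{x+r}|\psi(y)-\psi(x)|\,dy$, Lebesgue differentiation gives pointwise decay, and your continuity-of-measure argument on the measurable functions $\widetilde{F}_n$ is just Egorov in explicit form. The only cosmetic slip is that $r\mapsto|E^\psi_r(x,\delta)|$ is left-continuous (by continuity from below), not right-continuous as written, but either suffices for the countable-supremum reduction.
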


This lemma is a simple consequence of the Lebesgue differentiation theorem and Egorov's theorem. 

\begin{proof}[Proof of Theorem \ref{thm:regularity}]
Let $T_0$ and $T$ as in Theorem \ref{thm:topconjugacy}, and assume WLOG that $T_0$ is uniquely ergodic. Furthermore, assume that $T_0$ verifies the HS Condition (see Definition \ref{def:HS_Condition}). Denote by $h$ the associated conjugating map $h$ verifying $T \circ h = h \circ T_0$. Recall that by the discussion at the beginning of \S\ref{sc:singularity}, this measure is either singular or absolutely continuous with respect to the Lebesgue measure. 

Suppose by contradiction that the unique invariant probability measure $\mu$ of $T$ is absolutely continuous with respect to Lebesgue and denote by $\varphi$ the associated Radon-Nykodim derivative, namely, $\mu = \varphi \textup{Leb}$. Since $\mu$ is $T$-invariant, 
\begin{equation}
\label{eq:invariant_density}
(\varphi \circ T) T' = \varphi
\end{equation} 
almost surely. Recall that by Corollary \ref{cor:central_lower_bound}, 
\[ \delta := \tfrac{1}{4}\inf_{n \in \N} |\omega^{(m_k)}| > 0,\]
where the sequence $m_k$ denotes the sequence of Zorich times associated with $T_0$. We will derive a contradiction by showing that \eqref{eq:invariant_density} implies that the norm of $\omega^{(m_k)}$ may be arbitrarily small. 

\noindent By iterating \eqref{eq:invariant_density} and taking logarithm, it follows that, for any $k \in \N$,
 $$(\log \varphi \circ T^k) - \log \varphi = S_k^T f_\omega^T $$ 
almost surely, where $\omega$ denotes the log-slope vector of $T$. Composing the previous equality with $h$ and denoting $\psi = \log \varphi \circ h$, we obtain,
 \[ \psi \circ T_0^k - \psi = S_k^{T_0}f_\omega^{T_0} \]
almost surely. Considering the return times given by the Rauzy-Veech induction yields
\[ \psi \circ T_0^{q^{(n)}_\alpha}(x) - \psi(x) = \omega^{(n)}_\alpha, \]
for $x \in I^{(n)}_\alpha$ and $\alpha \in \A.$ Let $(n_k)_{k \in \N}$ be the subsequence of Zorich times given by Proposition \ref{prop:fullmeasure}, and let us denote
\[ q^k := q^{(n_k)}, \quad \omega^k := \omega^{(n_k)}, \quad I^k = I^{(n_k)}_\alpha.\]
Notice that by Condition HS, 
\begin{equation}
\label{eq:extended_SBS}
\psi \circ T_0^{q^k_\alpha}(x) - \psi(x) = \omega_\alpha,
\end{equation}
for $x \in \bigcup_{i = 0}^{h_k} T^i(I^{(n)}_\alpha)$ and $\alpha \in \A$, 
where $h_k = \tfrac{1}{4} \min_{\alpha \in \A} q^k_\alpha.$ 
Since $T_0$ verifies the BC Condition, there exists $0 < c_0 < 1$ such that
\begin{equation}
\label{eq:balanced_acc_lengths}
\min_{\alpha \in \A} \frac{|I^k_\alpha|}{|I^k|} > c_0,
\end{equation}
for any $k \in \N$. Denote $\epsilon = \tfrac{c_0}{8C}\min\{1, \delta\}$, where $C$ is given by the HS Condition, and let $r_0$ be given by Lemma \ref{lem:locally_constant}. Let $k$ sufficiently large so that $|I^k| < r_0$. Since 
\[ \textup{Leb}\left( \bigcup_{\alpha \in \A} \bigcup_{i = 0}^{h_k} T^i(I^k) \right) > \tfrac{1}{4C},\]
it follows from \eqref{eq:measure_good_points} that there exists $x_k = T^{i_k}(r_k)$, with $r_k \in \big[\tfrac{|I^k|}{2}, |I^k|\big)$ and $0 \leq i_k \leq h_k$, such that $$|E^\psi_{r_k}(x_k, \delta)| < \tfrac{2r_k\epsilon}{\delta}.$$
Fix $\alpha \in \A$. Notice that 
\[T^{i_k}(I_\alpha^k), T^{q^k_\alpha + i_k}(I_\alpha^k) \subset B_{r_k}(x_k),\]
and by \eqref{eq:balanced_acc_lengths},
\[|T^{i_k}(I_\alpha^k)|, |T^{q^k_\alpha + i_k}(I_\alpha^k)| \geq c_0 |I^k|.\]
Hence
\begin{align*}
\textup{Leb}& \left(\left\{ y \in T^{i_k}(I_\alpha^k) \,\left|\, y \notin E^\psi_{r_k}(x_k, \delta); \, T^{q_\alpha^k}(y) \notin E^\psi_{r_k}(x_k, \delta) \right. \right\} \right) \\
& \geq |I^k_\alpha| - 2 |E^\psi_{r_k}(x_k, \delta)| \geq |I^k_\alpha| - \frac{4r_k\epsilon}{\delta} \\
& \geq |I^k_\alpha| \left( 1 - \frac{4\epsilon}{\delta c_0}\right) \geq \frac{1}{2} |I^k_\alpha|.
\end{align*}
Thus, there exists $y^k_\alpha \in T^{i_k}(I_\alpha^k)$, verifying \eqref{eq:extended_SBS}, such that 
\[y^k_\alpha, T^{q^k_\alpha} (y^k_\alpha) \notin E^\psi_{r_k}(x_k, \delta).\]
Hence $$|\psi(y^k_\alpha) - \psi(T^{q^k_\alpha} (y^k_\alpha))| < 2\delta,$$ and by \eqref{eq:extended_SBS}, $|\omega^k_\alpha| < 2\delta$. 
Since $\alpha \in \A$ was arbitrary, and by definition of $\delta$, we have $|\omega^k| < 2\delta,$ we reached a contradiction. This concludes the proof.
\end{proof}

\section{Full measure of the IETs conditions}\label{sc:fullmeasureproof}
This section is devoted to the proof of Proposition \ref{prop:fullmeasure}, namely show simultaneously that the BC and the HS Conditions introduced in \S~\ref{sec:fullmeasure} (see Definitions~\ref{def:BC_condition} and~\ref{def:HS_Condition}) are satisfied by a full measure set of (irreducible) IETs. We start by introducing a few objects and notations needed in the proof. 

\subsection{Oseledet's splittings}
We denote the \emph{natural extensions} of the Zorich map $\ZorichMap$ and of the Zorich renormalization $\ZorichNorm$ by 
\[\ZorichMapExt: \DomExt \to \DomExt, \quad\quad \ZorichNormExt: \DomExtNorm \to \DomExtNorm.\]
The domains of these transformations admit a geometric interpretation in terms of \emph{zippered rectangles}, which were introduced by W. Veech \cite{veech_gauss_1982} when considering suspensions over IETs, and can be seen as subsets of $\Dom \times \R^\A$. We denote points in the domains $\DomExt$ and $\DomExtNorm$ by $\IETExt.$ 

Recall that $\ZorichNorm$ admits an unique invariant probability measure $\mu_{\ZorichNorm}$ equivalent to the Lebesgue measure and its natural extension $\ZorichNormExt$ admits an unique invariant probability measure $\mu_{\ZorichNormExt}$ equivalent to Lebesgue and such that $p_*(\mu_{\ZorichNormExt}) = \mu_{\ZorichNorm}$, where $p: \DomExtNorm \to \DomExt$ denotes the canonical projection $p\IETExt = \IET.$

Considering the natural extension of the Zorich renormalization and extending the cocycle $B$ trivially to $\DomExtNorm$ using the canonical projection $p: \DomExtNorm \to \DomExt$, the height and length cocycles admit invariant Oseledet's splittings
\begin{gather*}
 E_s\IETExt \oplus E_c\IETExt\oplus E_u\IETExt =\R^{\A},\\
 F_s\IETExt \oplus F_c\IETExt\oplus F_u\IETExt =\R^{\A},
\end{gather*}
respectively, corresponding to the sets of vectors with negative, zero, and positive Lyapunov exponents. These spaces verify
 \begin{equation}
 \label{eq:relation_flag_splitting}
 \begin{aligned}
& E_s\IETExt = E_s \IET, \quad\quad E_c\IETExt \oplus E_s\IETExt = E_{cs}\IET, \\
& F_s\IETExt = F_s \IET, \quad\quad F_c\IETExt \oplus F_s\IETExt = F_{cs}\IET,
 \end{aligned}
 \end{equation}
for a.e. $\IETExt \in \DomExtNorm$. Moreover, since the height and length cocycles are \emph{dual} to each other, we have
\begin{equation}
\label{eq: orthogonal_flags}
E_s\IETExt = F_{cs}\IETExt ^\bot, \hskip1cm F_s\IETExt = E_{cs}\IETExt ^\bot,
\end{equation}
for a.e. $\IETExt \in \DomExt.$ We refer the interested reader to \cite{zorich_deviation_1997} for a precise definition of dual cocycle. 

For the sake of simplicity, for a.e. $\IETExt \in \DomExt$ and for any $n \in \Z$, we denote their iterates under $\ZorichMapExt$ by $\IETExtn{n}$ and the associated Oseledets subspaces by $E^n_\epsilon\IETExt = E_\epsilon\IETExtn{n}$, where $\epsilon \in \{s, c, u\}$. 

\subsection{Angle control between the splittings}

Recall that the angle between two subspaces $\{0\} \subsetneq E, F \subsetneq \R^\A$ is given by 
\[\angle(E, F) = \min \left\{ \arccos \left( \left | \langle v, w \rangle \right| \right) \mid v \in E, w \in F, |v| = 1 = |w|\right\}.\]
Denoting by $\pi_{E, F}: E \to F$ the projection of $E$ to $F$, we have 
\[\| \pi_{E, F} \| \leq \cos\angle(E, F).\]
This implies the following.
\begin{lemma}
\label{lem: bound_projection}
Let $\{0\} \subsetneq E, F \subset \R^\A$ and $\delta = \cos\angle(E, F)$. Then 
\[ \sqrt{1 - \delta} |v| \leq |\pi_{F^\bot}(v)|,\]
for any $v \in E$, where {$\pi_{F^\bot}$} denotes the orthogonal projection to $F^\bot$. 
\end{lemma}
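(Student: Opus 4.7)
The plan is quite short, since this lemma is a purely linear-algebraic statement about two subspaces of $\R^\A$. First I would decompose $v$ orthogonally as $v = \pi_F(v) + \pi_{F^\bot}(v)$, where $\pi_F$ denotes the orthogonal projection onto $F$. The Pythagorean theorem gives
\[ |v|^2 = |\pi_F(v)|^2 + |\pi_{F^\bot}(v)|^2,\]
so it suffices to obtain an upper bound on $|\pi_F(v)|$ in terms of $|v|$.

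Second, I would use the remark made just before the statement of the lemma, namely that $\|\pi_{E, F}\| \leq \cos\angle(E, F) = \delta$. This inequality is immediate from the definition of $\angle(E, F)$: writing $w := \pi_F(v)/|\pi_F(v)|$ (when nonzero) gives a unit vector in $F$ with $\langle v/|v|, w \rangle = |\pi_F(v)|/|v|$, so by the definition of $\angle(E,F)$ as a minimum of $\arccos|\langle \cdot, \cdot\rangle|$, this correlation is at most $\cos \angle(E,F) = \delta$. Hence for any $v \in E$,
\[ |\pi_F(v)| \leq \delta \, |v|.\]

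Substituting into the Pythagorean identity yields
\[ |\pi_{F^\bot}(v)|^2 \geq |v|^2 - \delta^2 |v|^2 = (1 - \delta^2) |v|^2 \geq (1 - \delta) |v|^2,\]
where the last inequality uses that $\delta \in [0, 1]$ (as $\delta$ is a cosine of an angle in $[0, \pi/2]$). Taking square roots gives the claim.

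There is essentially no obstacle here beyond carefully unwinding the conventions, in particular checking that the paper's definition of $\angle(E, F)$ as a minimum of $\arccos|\langle v, w\rangle|$ indeed corresponds to the maximum of $|\langle v, w\rangle|$, and that this maximum controls $\|\pi_{E, F}\|$. In fact the argument gives the slightly sharper bound $\sqrt{1 - \delta^2}\,|v| \leq |\pi_{F^\bot}(v)|$, but the weaker form stated in the lemma is what is needed later.
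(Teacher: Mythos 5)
Your argument is correct and is precisely the one the paper leaves implicit: the remark $\|\pi_{E,F}\|\leq\cos\angle(E,F)$ followed by the Pythagorean decomposition $|v|^2=|\pi_F(v)|^2+|\pi_{F^\bot}(v)|^2$ and the elementary inequality $1-\delta^2\geq 1-\delta$ for $\delta\in[0,1]$. Your observation that the argument actually yields the sharper constant $\sqrt{1-\delta^2}$ is also accurate; the paper simply states the weaker form it needs.
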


The following observation will be of fundamental importance.  For a proof see \cite[Proposition 7.6]{yoccoz_interval_2010}.

\begin{proposition}
\label{prop: trivial_action}
For a.e. $\IETExt \in \DomExt$ and for any $n \in \Z$,
\[ \LC{0}{n}\IETExt (\Kpi) = \Kpim{n}. \]
Moreover, it is possible to pick a base of $\Kpi$, for each $\pi \in \PermSpace$, such that, for a.e. $\IETExt \in \DomExt$ and for any $n \in \Z$, the matrix associated to the transformation 
\[ \LC{0}{n}\IETExt \mid_{\Kpi}: \Kpi \to \Kpim{n},\]
 with respect to the selected basis, is the identity. 
\end{proposition}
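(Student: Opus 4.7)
The plan is to first establish the fundamental algebraic identity relating $\Omega_{\pi}$ and $\Omega_{\pi'}$ under one step of Rauzy-Veech induction, then use it to deduce invariance of the kernels under the length cocycle, and finally exhibit a canonical basis on which the cocycle acts trivially.

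First, I would prove the identity
\[ \Omega_{\pi^{(n+1)}} = \bigl(A(\pi^{(n)}, \lambda^{(n)})\bigr)^T\, \Omega_{\pi^{(n)}}\, A(\pi^{(n)}, \lambda^{(n)}) \]
for one step of Rauzy-Veech. This can be verified either directly from the combinatorial definition \eqref{eq:exchange_matrix}, by analysing the two types (top and bottom) of Rauzy-Veech step separately, or more conceptually by using the classical identity between the height and length vectors given by $\Omega_\pi$ together with the transformation rules $\lambda^{(n+1)} = A^{-1} \lambda^{(n)}$ and $h^{(n+1)} = A^T h^{(n)}$ (the first consistent with \eqref{eq:length_prop_cocycle}, the second consistent with \eqref{eq:height_prop_cocycle}). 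Iterating gives $\Omega_{\pi^{(n)}} = (A_{0,n})^T \Omega_\pi\, A_{0,n}$ for every $n \geq 0$.

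From this relation, if $v \in \Kpi$ then $\Omega_{\pi^{(n)}}(A_{0,n}^{-1} v) = A_{0,n}^T \Omega_\pi v = 0$, hence $A_{0,n}^{-1} v \in \Kpim{n}$. Since $A_{0,n}$ is invertible, the congruence $\Omega \mapsto A_{0,n}^T\, \Omega\, A_{0,n}$ preserves the rank, so $\dim \Kpim{n} = \dim \Kpi$, and the inclusion $A_{0,n}^{-1}(\Kpi) \subseteq \Kpim{n}$ must be an equality. Passing from the Rauzy-Veech cocycle to its Zorich acceleration $B$ gives the first claim of the proposition for non-negative $n$; the extension to negative $n \in \Z$ follows from the natural extension structure of $\ZorichMapExt$.

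For the second claim, I would use the geometric interpretation of $\Kpi$ in terms of the translation surface $M_\pi$ suspended over $\IET$: the kernel admits a canonical basis indexed by the \emph{singularities} of $M_\pi$, that is, by the equivalence classes of endpoints of the intervals $\{I_\alpha\}_{\alpha \in \A}$ under the identifications dictated by $\pi$ (modulo one distinguished class that accounts for the relation among basis vectors). To each singularity $\sigma$ one associates a canonical vector $v_\sigma \in \Kpi$ obtained as a signed sum of the standard basis vectors $e_\alpha \in \R^\A$ over the indices $\alpha$ whose corresponding interval is adjacent to $\sigma$ on the top or bottom row. Under a single step of Rauzy-Veech, the suspension surface is unchanged up to an explicit cut-and-paste, and in particular there is a canonical bijection $\sigma \mapsto \sigma'$ between the singularities of $\pi$ and those of $\pi'$. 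A direct case-by-case verification then shows $A^{-1} v_\sigma = v_{\sigma'}$. Choosing these singularity-indexed bases coherently across the whole Rauzy class yields the desired basis in which the cocycle acts as the identity.

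The main obstacle will be the explicit case-by-case verification of $A^{-1} v_\sigma = v_{\sigma'}$, which requires careful bookkeeping of how the gluing pattern defining the singularities is modified under each of the two types of Rauzy-Veech move. While conceptually transparent (it is just reflecting that cut-and-paste does not create or destroy singularities), this step is the combinatorial core of the argument; it is classical and is carried out in detail for instance in \cite{yoccoz_interval_2010}.
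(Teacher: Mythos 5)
Your argument is correct and coincides with the proof the paper relies on: the paper does not prove this proposition itself but defers to \cite[Proposition 7.6]{yoccoz_interval_2010}, and your two steps --- the congruence $\Omega_{\pi^{(n+1)}} = A^T\,\Omega_{\pi^{(n)}}\,A$ (which, together with invariance of rank under congruence, gives $A^{-1}_{0,n}(\Kpi)=\Kpim{n}$), followed by the singularity-indexed basis of $\Kpi$ on which the length cocycle acts via the canonical identification of marked points under cut-and-paste --- are exactly the argument carried out there. The one piece you leave unverified, the case-by-case check that $A^{-1}v_\sigma = v_{\sigma'}$ for each of the two Rauzy moves, is indeed the combinatorial core, and it is precisely what the cited reference supplies.
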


The previous proposition shows that the central space for the length cocycle is given by 
\begin{equation}
\label{eq: kernel_central}
F_c\IETExt = \textup{Ker}(\Omega_\pi),
\end{equation}
for a.e. $\IETExt \in \DomExtNorm$. Applying Proposition \ref{prop: trivial_action}, we can show the following. 

\begin{lemma}
\label{lem:bounds_central}
There exists $C_0 > 1$ such that for a.e. $\IET$, 
\[ \left \| \pi_{\Kpim{n}} \circ \HC{0}{n} \right\| \leq C,\]
for any $n \in \N$. Moreover, if $\pi^n = \pi$ for some $n \in \N$, then 
\[ \pi_{\Kpim{n}} \circ \HC{0}{n} = \pi_{\Kpim{n}}. \]
\end{lemma}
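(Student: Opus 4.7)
The proof will rely on two ingredients: the triviality of the length cocycle on $\Kpi$ provided by Proposition~\ref{prop: trivial_action}, transferred to $\HC{0}{n}$ by transposition, and the finiteness of the Rauzy class $\PermSpace$, which will absorb all the geometric constants.

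I would set $M := B_{0,n}$, so that $\LC{0}{n} = M^{-1}$ and $\HC{0}{n} = M^T$, and fix, for each $\sigma \in \PermSpace$, the basis $\{e^\sigma_i\}$ of $\textup{Ker}(\Omega_\sigma)$ given by Proposition~\ref{prop: trivial_action}, so that $M^{-1} e^\pi_i = e^{\pi^n}_i$, equivalently $M e^{\pi^n}_i = e^\pi_i$. The key observation, which I would establish first, is that $M^T$ sends $\Kpi^\bot$ into $(\Kpim{n})^\bot$: for $v \in \Kpi^\bot$ and $u \in \Kpim{n}$, one has $Mu \in \Kpi$, whence $\langle M^T v, u\rangle = \langle v, Mu\rangle = 0$. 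Therefore $\pi_{\Kpim{n}} \circ M^T$ already vanishes on $\Kpi^\bot$, and only its restriction to $\Kpi$ needs to be controlled.

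To control that restriction, I would take $v = \sum_i a_i e^\pi_i \in \Kpi$ and determine the coordinates $b_j$ of $w = \pi_{\Kpim{n}}(M^T v) = \sum_j b_j e^{\pi^n}_j$ from the defining identity $\langle w, e^{\pi^n}_\ell\rangle = \langle v, M e^{\pi^n}_\ell\rangle = \langle v, e^\pi_\ell\rangle$. This yields the linear system $G^{\pi^n} b = G^\pi a$, where $G^\sigma_{ij} := \langle e^\sigma_i, e^\sigma_j\rangle$ is the Gram matrix of the chosen basis of $\textup{Ker}(\Omega_\sigma)$. Since $\PermSpace$ is finite, the positive definite matrices $G^\sigma$, their inverses, and the norms of the vectors $e^\sigma_i$ range over a finite set, so there is a uniform constant $C_0$, depending only on $\PermSpace$, such that $\|\pi_{\Kpim{n}} \circ \HC{0}{n}\| \leq C_0$, which is the first statement.

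For the \emph{moreover} part, when $\pi^n = \pi$ the chosen bases coincide and $G^{\pi^n} = G^\pi$, so the system above forces $b = a$. Hence $\pi_{\Kpim{n}} \circ M^T$ acts as the identity on $\Kpim{n} = \Kpi$ and, by the first observation, as zero on $(\Kpim{n})^\bot = \Kpi^\bot$, so it coincides with $\pi_{\Kpim{n}}$ on all of $\mathbb{R}^\A$. No real obstacle arises; the only care needed is the bookkeeping distinguishing the basis and Gram matrix attached to $\pi$ from those attached to $\pi^n$, since the transposition identity mixes source and target decompositions.
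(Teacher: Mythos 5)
Your proposal is correct and rests on the same two pillars as the paper's proof: the identity $\langle \HC{0}{n}v, w\rangle = \langle v, (\LC{0}{n})^{-1}w\rangle$ (your $\langle M^T v, e^{\pi^n}_\ell\rangle = \langle v, e^\pi_\ell\rangle$) together with the trivial action of the length cocycle on $\Kpi$ from Proposition~\ref{prop: trivial_action}, with finiteness of $\PermSpace$ absorbing the constants. The paper bounds the pairing $|\langle \HC{0}{n}v,w\rangle|$ directly for $w\in\Kpim{n}$ rather than solving the Gram-matrix system for the projection's coordinates, but this is only a difference in bookkeeping, not in substance.
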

\begin{proof}
Let $n \in \N$ be fixed. Notice that $\big(\HC{0}{n}\big)^{-1} = \big(\LC{0}{n}\big)^T$. Hence, for any $v \in \R^\A$ and any $w \in \Kpim{n}$,
\begin{align*}
 \big \langle \HC{0}{n} v, w \big\rangle & = \big\langle v, \big(\HC{0}{n}\big)^T w \big\rangle = \big\langle v, \big(\LC{0}{n}\big)^{-1} w \big\rangle .
\end{align*}
By Proposition \ref{prop: trivial_action}, it follows that 
\[ \big| \big \langle \HC{0}{n} v, w \big\rangle \big| \leq C_0|v||w| \]
for some constant $C_0$ depending only on $d$, and if $\pi^n = \pi$,
\[ \big \langle \HC{0}{n} v, w \big\rangle = \langle v, w\rangle,\]
which proves the lemma. 
\end{proof}

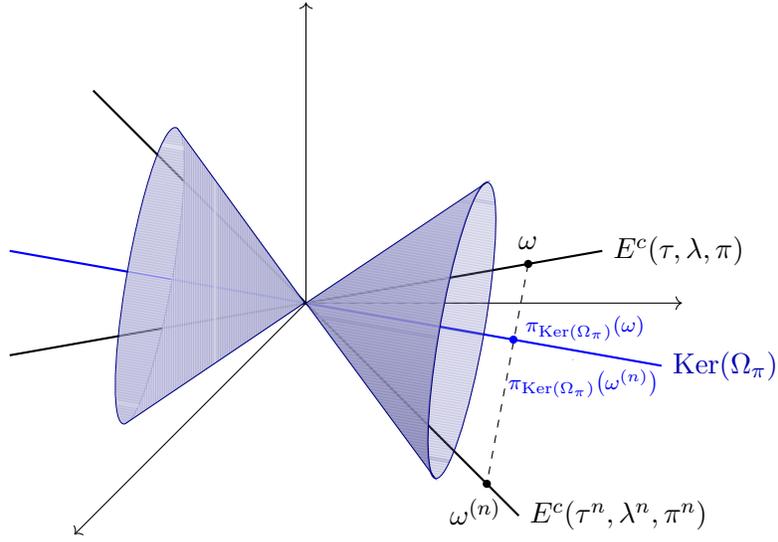
\begin{figure}
\centering
\begin{tikzpicture}
  \def\R{2.8}
  \coordinate (O) at (0,0);
  \coordinate (J1) at (170:\R); 
  \coordinate (-J1) at (170:-\R);
  \coordinate (J2) at (-10:\R);
  \draw[->] (0,0,0) -- (5,0,0) node[right] {}; 
  \draw[->] (0,0,0) -- (0,4,0) node[right] {}; 
  \draw[->] (0,0,0) -- (0,0,8) node[above] {}; 
\draw[thick,blue] (170:4) -- (170:-4.8);
\draw[thick,black] (190:4) -- (190:-4);
\draw[thick,black] (135:4) -- (135:-4);
\draw[thin,dashed,black] (190:-3) -- (135:-3.4);

\node[circle, fill, scale=0.3, label=above:$\omega$] at (190:-3) {};
\node[circle, fill=blue, scale=0.3,label=above:] at (170:-2.8) {};
\node[circle, fill=blue, scale=0.0,label={[text=blue]:{\scriptsize$\quad\pi_{\textup{Ker}(\Omega_\pi)}(\omega)$}}] at (170:-3.62) {};
\node[vector,right] at (170:-4.8) {$\textup{Ker}(\Omega_\pi)$};
\node[circle, fill, scale=0.3, label=below:$\omega^{(n)}\hspace{0.3cm}$] at (135:-3.4) {};
\node[circle, fill=blue, scale=0.0,label={[below,text=blue]:{\scriptsize$\quad\pi_{\textup{Ker}(\Omega_\pi)}\big(\omega^{(n)}\big)$}}] at (168:-3.62) {};
\node[vector,black,right] at (190:-4) {$E^c(\tau, \lambda, \pi)$};
\node[vector,black,right] at (135:-4) {$E^c(\tau^n, \lambda^n, \pi^n)$};
\jetcone{O}{J1}{2}{0.3}
\jetconee{O}{J2}{2}{0.3}
\end{tikzpicture}
\caption{\small Denoting $\omega^{(n)} = \HC{0}{n}\IETExt\omega$, for every $n$ such that $\pi^{n} = \pi$ the projection of $\omega^{(n)}$ to $\textup{Ker}(\Omega_\pi)$ remains constant (see Lemma \ref{lem:bounds_central}). Moreover, whenever the angle between $E^c(\tau^n, \lambda^n, \pi^n)$ and $\textup{Ker}(\Omega_\pi)$ is small, the norm of $\omega^{(n)}$ is bounded, from above and below, by positive constants depending only on $\omega$ (see Corollary \ref{cor:central_lower_bound} and Lemma \ref{lem: bound_cocycle_central}).}
\end{figure}

\begin{corollary}
\label{cor:central_lower_bound}
For a.e. $\IETExt$ and for any $v \in E_c\IETExt \setminus \{0\}$ 
\[ \inf_{n \in \N} |\HC{0}{n}v| > 0. \] 
\end{corollary}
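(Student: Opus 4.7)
\textbf{Proof plan for Corollary \ref{cor:central_lower_bound}.}

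The plan is to use Lemma \ref{lem:bounds_central} together with a duality argument to show that $|\HC{0}{n}(\IETExt) v|$ is bounded below, for every $n \in \N$, by the norm of its orthogonal projection onto $\Kpim{n}$, and that this projection norm takes only finitely many positive values as $n$ varies, indexed by the permutations appearing along the orbit. Since the Rauzy class is finite, the minimum over these finitely many values will yield a uniform positive lower bound.

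Set $v_n := \HC{0}{n}(\IETExt) v$. The first step, which is the most delicate, is to prove that for a.e.~$\IETExt$ and any nonzero $w \in E_c(\IETExt)$, the orthogonal projection $\pi_{\Kpi}(w)$ does not vanish. By \eqref{eq: orthogonal_flags} and \eqref{eq:relation_flag_splitting} we have $E_c(\IETExt) \subset E_{cs}(\IET) = F_s(\IET)^\bot$, so $w \perp F_s$. The analogous duality on the unstable side, which follows from \eqref{eq: orthogonal_flags} applied to the inverse cocycle, gives $E_c(\IETExt) \subset E_{cu}(\IETExt) = F_u(\IETExt)^\bot$, so $w \perp F_u$. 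In view of \eqref{eq: kernel_central} and the decomposition $\R^\A = F_s \oplus F_c \oplus F_u$, if $w$ were also orthogonal to $F_c = \Kpi$ then $w$ would be orthogonal to all of $\R^\A$, forcing $w = 0$.

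The second step identifies, for each permutation $\pi^* \in \PermSpace$ appearing in the orbit of $\IETExt$, a common value $c(\pi^*) := |\pi_{\textup{Ker}(\Omega_{\pi^*})}(v_n)|$ shared by every $n \in \N$ with $\pi^n = \pi^*$. Indeed, given two such times $n_1 < n_2$, applying the second assertion of Lemma \ref{lem:bounds_central} to the shifted IET $\ZorichMapExt^{n_1}\IETExt$ (whose permutation is $\pi^*$) over the interval of length $n_2 - n_1$ gives the identity $\pi_{\textup{Ker}(\Omega_{\pi^*})}(v_{n_2}) = \pi_{\textup{Ker}(\Omega_{\pi^*})}(v_{n_1})$. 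Moreover, $v_n$ is a nonzero element of $E_c(\ZorichMapExt^n \IETExt)$ (since $\HC{0}{n}$ is invertible and preserves the Oseledets splitting), so applying the first step at $\ZorichMapExt^n \IETExt$ forces $c(\pi^n) > 0$.

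Since the Rauzy class of $\pi$ is finite, $c := \min_{\pi^*} c(\pi^*) > 0$, where the minimum is taken over the finitely many permutations appearing in the orbit, and the trivial estimate $|v_n| \geq |\pi_{\Kpim{n}}(v_n)| = c(\pi^n) \geq c$ for every $n \in \N$ yields the conclusion $\inf_{n \in \N} |v_n| \geq c > 0$. The main obstacle is the orthogonality argument in the first step: the paper records only the stable-side duality \eqref{eq: orthogonal_flags}, and one must invoke or briefly verify its unstable-side analogue $E_{cu}(\IETExt) = F_u(\IETExt)^\bot$ (which follows from the same Veech duality applied to the backward-in-time cocycle) to close the argument. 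Everything else is a direct application of Lemma \ref{lem:bounds_central} together with finiteness of the Rauzy class.
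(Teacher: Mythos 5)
Your proposal is correct and its overall architecture coincides with the paper's: show that a nonzero central vector has nonzero projection onto $\Kpi$, use Lemma~\ref{lem:bounds_central} to see that this projection is frozen along returns to a fixed permutation, and conclude by finiteness of the Rauzy class (your steps~2 and~3 are in fact a more careful write-up of the paper's rather terse final two sentences). The one genuine difference is in the key sub-step $E_c\IETExt \cap \orth{\Kpi} = \{0\}$. You argue via the full splitting $\R^\A = F_s \oplus F_c \oplus F_u$, which forces you to invoke the unstable-side duality $E_{cu} = F_u^\bot$ — a true fact, but one the paper never states, so as you note it would have to be verified separately. The paper instead gets the same conclusion from the two recorded relations \eqref{eq: orthogonal_flags} alone, via the chain
\begin{align*}
E_c \cap \orth{\Kpi} \subseteq E_c \cap \orth{\Kpi} \cap E_{cs} = E_c \cap (\Kpi \oplus F_s)^\bot = E_c \cap F_{cs}^\bot = E_c \cap E_s = \{0\},
\end{align*}
using $E_c \subseteq E_{cs} = F_s^\bot$ and $\Kpi \oplus F_s = F_c \oplus F_s = F_{cs}$. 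This buys a self-contained argument at the cost of being slightly less transparent; your version is more symmetric but carries the extra burden of justifying the backward-time duality. Either route closes the proof.
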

\begin{proof}
By \eqref{eq: orthogonal_flags} \eqref{eq: kernel_central}, 
\begin{align*}
E_c(\tau, \lambda, \pi) \cap {\Kpi}^\bot & \subseteq E_c(\tau, \lambda, \pi) \cap {\Kpi}^\bot  \cap E_{cs}(\tau, \lambda, \pi) \\
& =  E_c(\tau, \lambda, \pi) \cap {\Kpi}^\bot  \cap F_s(\tau, \lambda, \pi)^\bot \\
& = E_c(\tau, \lambda, \pi) \cap ({\Kpi} \oplus F_s(\tau, \lambda, \pi))^\bot  \\
& = E_c(\tau, \lambda, \pi) \cap F_{cs}(\tau, \lambda, \pi)^\bot \\ 
& = E_c(\tau, \lambda, \pi) \cap E_s(\tau, \lambda, \pi),
\end{align*}
for a.e. $\IETExt.$ Hence, 
\begin{equation}
\label{eq:iso_central_kernel}
E_c\IETExt \cap \orth{\Kpi} = \{0\},
\end{equation}
for a.e. $\IETExt,$ since, otherwise, the stable and central spaces associated with the height cocycle would have a non-trivial intersection. By Lemma \ref{lem:bounds_central}, the projection of $\HC{0}{n}v$ to the spaces $\Kpi$ is constant. Therefore, its norm is uniformly bounded from below. 
\end{proof}

\begin{lemma}
\label{lem: bound_cocycle_central}
For a.e. $\IETExt \in \DomExtNorm$ and for any $n \geq m \geq 0$, there exists a constant $C \IETExtn{n} > 0$, depending only on $\angle(E_c^n\IETExt , \orth{\Kpim{n}})$, such that the linear \mbox{operator} 
\[\HC{m}{n}\IETExt \mid_{E^m_c\IET}: E^m_c\IETExt \to E^n_c\IETExt\]
 verifies 
\[ \left \|\HC{m}{n}\IETExt \mid_{E^m_c\IETExt } \right \| \leq C\IETExtn{n}.\]
\end{lemma}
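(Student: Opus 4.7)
The idea is to combine the essentially constant action of the height cocycle on the projection onto $\Kpim{n}$ (which underlies Lemma \ref{lem:bounds_central}) with a transversality lower bound coming from Lemma \ref{lem: bound_projection}, applied to the subspaces $E^n_c\IETExt$ and $\orth{\Kpim{n}}$. The image of a central vector under the cocycle lives in $E^n_c\IETExt$, and the angle between $E^n_c\IETExt$ and $\orth{\Kpim{n}}$ measures how much one loses by projecting back onto $\Kpim{n}$; the point is that this projection is bounded uniformly, so if the angle is controlled we get the desired operator bound.

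More precisely, I would fix $v \in E^m_c\IETExt \setminus \{0\}$ and consider its image $\HC{m}{n}\IETExt v$, which lies in $E^n_c\IETExt$ by equivariance of the Oseledets splitting under the cocycle. Setting
\[\delta := \cos\angle\bigl(E^n_c\IETExt,\, \orth{\Kpim{n}}\bigr)\]
and applying Lemma \ref{lem: bound_projection} with $E = E^n_c\IETExt$ and $F = \orth{\Kpim{n}}$ (so that $F^\bot = \Kpim{n}$) yields
\[\sqrt{1-\delta}\, \bigl|\HC{m}{n}\IETExt v\bigr| \;\leq\; \bigl|\pi_{\Kpim{n}}\bigl(\HC{m}{n}\IETExt v\bigr)\bigr|.\]
Observe that $\delta < 1$ thanks to the identity $E^n_c \cap \orth{\Kpim{n}} = \{0\}$ established in \eqref{eq:iso_central_kernel} (applied to the iterate $\IETExtn{n}$), so the angle is strictly positive and this inequality is informative.

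It then remains to bound the right-hand side linearly in $|v|$ by a universal constant. For this I would apply the computation of Lemma \ref{lem:bounds_central} verbatim to the shifted base point $\IETExtn{m}$ and the cocycle block from time $m$ to $n$: since $(\HC{m}{n})^{-1} = (\LC{m}{n})^T$, for any $w \in \Kpim{n}$ one has
\[\bigl\langle \HC{m}{n}\IETExt v,\, w \bigr\rangle \;=\; \bigl\langle v,\, (\LC{m}{n}\IETExt)^{-1} w \bigr\rangle,\]
and by Proposition \ref{prop: trivial_action} the vector $(\LC{m}{n})^{-1}w \in \Kpim{m}$ has norm bounded by a constant $C_0$ depending only on $d$ (in the canonical basis, the matrix of $\LC{m}{n}$ restricted to the kernels is the identity). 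This yields $|\pi_{\Kpim{n}}(\HC{m}{n}\IETExt v)| \leq C_0 |v|$, and combining with the previous display gives
\[\bigl|\HC{m}{n}\IETExt v\bigr| \;\leq\; \frac{C_0}{\sqrt{1-\delta}} |v|,\]
so one sets $C\IETExtn{n} := C_0/\sqrt{1-\delta}$, a quantity depending only on the angle $\angle(E^n_c\IETExt, \orth{\Kpim{n}})$ as required.

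The only point that demands attention is the transfer of Lemma \ref{lem:bounds_central}, stated for the time-zero block $\HC{0}{n}$, to the general block $\HC{m}{n}$; this follows at once from the $\ZorichMapExt$-invariance of $\mu_{\ZorichNormExt}$ (so that almost-sure statements hold after shifting the base point) combined with the fact that Proposition \ref{prop: trivial_action} is formulated for arbitrary iterates. Beyond this bookkeeping, no further analytic input is needed.
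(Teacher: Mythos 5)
Your proof is correct and follows essentially the same route as the paper: transversality of $E^n_c$ and $\orth{\Kpim{n}}$ from \eqref{eq:iso_central_kernel}, Lemma \ref{lem: bound_projection} to reduce the operator norm to the norm of the projection onto $\Kpim{n}$, and then Lemma \ref{lem:bounds_central} (via Proposition \ref{prop: trivial_action}) to bound that projection. You even handle two points more carefully than the paper's terse write-up, namely the correct choice $E = E^n_c$, $F = \orth{\Kpim{n}}$ in the projection lemma and the transfer of the kernel-projection bound from the time-zero block to the general $(m,n)$ block.
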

\begin{proof}
Let $\IETExt \in \DomExtNorm$ be Oseledets generic and fix $n > m \geq 0$. By \eqref{eq:iso_central_kernel}, we may assume without loss of generality that
\[E_c^n\IETExt \cap \Kpim{n}^\bot = \{0\}, \]
 for any $n \in \N$. Thus, by  Lemma \ref{lem: bound_projection}, {applied to $E:=E^m_c\IETExt$ and $F:={\Kpim{n}}$,}
\begin{equation}
\label{eq:bound_projection}
\left \|\HC{m}{n} \mid_{E^m_c\IETExt } \right \| \leq C\IETExtn{n} \left \| \pi_{\Kpim{n}} \circ \HC{n}{m} \mid_{E^m_c\IETExt } \right \|,
\end{equation}
for some constant $C\IETExtn{n}$ depending only on $\angle(E_c^n\IETExt , \orth{\Kpim{n}})$, where $\pi_{\Kpim{n}}$ denotes the projection from $\R^\A$ onto $ \Kpim{n}.$

\smallskip
The result now follows from Lemma \ref{lem:bounds_central} and \eqref{eq:bound_projection}.
\end{proof}

\subsection{Full measure of the BC and HS conditions.}\label{sec:fullmeasureproof}
We now have all the ingredients to present the proof of full measure of the BC and HS conditions, i.e.~Propostion~\ref{prop:fullmeasure}. 
\begin{proof}[Proof of Proposition~\ref{prop:fullmeasure}]
By Lemma \ref{lem: bound_cocycle_central}, there exists a measurable function 
\[\mathcal{C}: \DomExtNorm \to (0, +\infty),\]
 such that for a.e. $\IETExt \in \DomExtNorm$ and for any $n \geq 0$, 
\[ \left \|\HC{0}{n}\IETExt \mid_{E_c\IETExt } \right \| \leq \mathcal{C}\IETExtn{n}.\]
Clearly, it is enough to show that almost every IET in every fixed Rauzy class verifies the conditions, so let us fix a Rauzy class $\mathfrak{R}$ and a permutation $\pi^*$ in $\mathfrak{R}$.

 Let $\gamma$ be a finite path in the Rauzy-graph, starting and ending at $\pi^*$, such that $A_\gamma := \ZC{0}{|\gamma|}(\pi^*, \lambda)$ is a positive matrix, where {$N:=|\gamma|$} denotes the length of $\gamma$, and $\lambda$ belongs to the set $\Delta^*$ of length vectors $\lambda \in \Delta_\A$ such that $(\pi^*, \lambda)$ satisfies Keane's condition and its combinatorial rotation number starts by { $\gamma \star \gamma$, where $\star$ denotes the justapposition} of two Rauzy paths. 
For any $\pi \in \PermSpace$, define 
\[\Xi_\pi := \left\{ \tau \in \R^A \,\left|\, \, \frac{h_\alpha}{2} < \sum_{\pi_0(\beta) < \pi_0(\alpha)} \tau_\beta, \text{ for all } \alpha \in \A \text{ s.t. } \pi_0(\alpha) \neq 1, d \right. \right\}, \]
{ where  $h_\alpha:=-\frac{1}{2}(\Omega_\pi \tau)_\alpha$ is the height of the zippered rectangle labelled $\alpha\in \mathcal{A}$.  }
Then there exists a positive measure set 
\[Y \subset \big\{ \IETExt \in \DomExtNorm \mid \pi = \pi^*;\, \lambda \in \Delta^* \text{ and } \tau^{(N)} \in \Xi_{\pi^*} \}.\] Moreover, { since $\mathcal{C}$ is measurable and $Y$ has positive measure, by Luzin's theorem, by replacing $Y$ with a smaller subset,} we can assume WLOG that $\mathcal{C}$ restricted to $\ZorichMapExt^N(Y)$ is uniformly bounded by a constant $V > 0$.

By ergodicity of the extended Zorich cocycle, for a.e. $\IETExt \in \DomExtNorm$ with $\pi \in \mathfrak{R}$ there exists an increasing sequence $(m_k)_{k \in \N} \subset \N$ such that $\IETExtn{(m_k)} \in Y$, for all $k \in \N$. In particular, for any $k \in \N$, { since $|\gamma \star \gamma|=2N$,
$$
\ZC{0}{2N}\IETExtn{(m_k)} = A_\gamma\star\gamma = A_\gamma A_\gamma, 
$$
so that}
\begin{align}
\label{eq:double_matrix}
& \ZC{0}{N}\IETExtn{(m_k)} = A_\gamma = \ZC{0}{N}\IETExtn{(m_k + N)},\\
\label{eq:bounded_central_space}
& \left \|\HC{0}{m_k + N}\IETExt \mid_{E_{c}\IETExt } \right \| \leq V, \\
\label{eq:good_tau}
 & \tau^{(m_k + N)} \in \Xi_{\pi^{*}}.
 \end{align}
Since for a.e. $\IETExt \in \DomExtNorm,$
\[ \sup_{n \geq 1} \left \|\HC{0}{n}\IETExt \mid_{E_{s}\IETExt} \right \| < +\infty,\]
and by a standard Fubini argument, a full measure set in $\DomExtNorm$ gives a full measure set of IETs in $\DomNorm$ with the same forward cocycle matrices (see e.g.~\cite{ghazouani_priori_2021}), it follows from the second equality in \eqref{eq:double_matrix}, \eqref{eq:bounded_central_space} and \eqref{eq:relation_flag_splitting} that a.e. $\IETExt$ with $\pi \in \mathfrak{R}$ verifies the BC Condition along the subsequence $(n_k)_{k\in \mathbb{N}}$ given by $n_k:= m_k+N$. 

{We claim that the HS Condition also holds along the subsequence $(n_k)_{k\in \mathbb{N}}$ given by $n_k:= m_k+N$.  It is indeed also standard to check that the first equality in \eqref{eq:double_matrix}} implies assertion \eqref{cond:balanced_heights} for some $C > 0$ depending only on $A_\gamma.$ Furthermore,  assertion \eqref{cond:continuous_iterates} holds by \eqref{eq:good_tau}, see e.g.~Chapter 15 in \cite{viana_ergodic_2006}.  Thus, a.e. $\IETExt$ with $\pi \in \mathfrak{R}$ verifies the HS Condition.

\end{proof}

\section{Appendix}\label{app:Cobo}
In this section, we provide an alternative proof of Theorem \ref{thm:regularity}, by showing that it follows as an application of Theorem \ref{thm:topconjugacy} from a result by M. Cobo (which in turns exploits the work of W. Veech \cite{veech_gauss_1982}) which says the following.

\begin{theorem}[Theorem 1 in \cite{cobo_piece-wise_2002}]
\label{thm:Cobo}
For almost every IET $T_0$, for any $\omega \in E_{cs}(T_0) \,\setminus\, E_s(T_0)$ and for any AIET $T \in \textup{Aff}(\gamma(T_0), \omega)$, any conjugating map between $T_0$ and  $T$ is not an absolutely continuous function.
\end{theorem}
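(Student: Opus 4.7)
The plan is to leverage Theorem~\ref{thm:topconjugacy} to reduce everything to a statement about the IET $T_0$. By that theorem any $T \in \textup{Aff}(\gamma(T_0), \omega)$ is topologically conjugate to $T_0$ via a homeomorphism $h$ with $h \circ T = T_0 \circ h$. Since almost every $T_0$ is uniquely ergodic (Masur--Veech), the push-forward $\mu := h_* \mathrm{Leb}$ is the unique $T$-invariant Borel probability measure, which gives unique ergodicity of $T$. By ergodicity, the Lebesgue decomposition of $\mu$ must put all its mass on either the absolutely continuous or the singular part, so it suffices to rule out absolute continuity.

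I would argue by contradiction. Suppose $\mu = \varphi \cdot \mathrm{Leb}$ with density $\varphi > 0$. Then the $T$-invariance equation $(\varphi \circ T) \cdot T' = \varphi$, taking logarithms and iterating, yields $\log\varphi \circ T^k - \log\varphi = \pm S_k^T f_{T,\omega}$ a.e., where $f_{T,\omega}$ is the piecewise-constant log-slope function of $T$. Setting $\psi := \log\varphi \circ h$ and using the conjugacy together with the identity $f_{T,\omega} = f_{T_0,\omega} \circ h$ (established already in the proof of Corollary~\ref{cor:boundedseq}), we get
\[ \psi \circ T_0^k - \psi = \pm S_k^{T_0} f_{T_0,\omega} \quad \text{a.e.}\]
Specializing to the Rauzy--Veech return times along the sequence $(n_k)_{k\in\N}$ provided by the BC/HS conditions, and using \eqref{SBS_cocycle_relation}, this becomes $\psi(T_0^{q^{(n_k)}_\alpha}(x)) - \psi(x) = \pm \omega^{(n_k)}_\alpha$ for $x \in I^{(n_k)}_\alpha$. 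Crucially, by the continuity condition~(\ref{cond:continuous_iterates}) in the HS Condition, this identity extends to every $x$ in a long initial segment $\bigcup_{i=0}^{h_k} T_0^i(I^{(n_k)}_\alpha)$ of each tower, where $h_k$ is comparable to the full height.

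To reach a contradiction, I would combine two ingredients: first, Corollary~\ref{cor:central_lower_bound} applied to the central vector $\omega \in E_{cs}(T_0) \setminus E_s(T_0)$ (after projecting out the stable direction, which decays exponentially) yields a uniform lower bound $|\omega^{(n_k)}| \geq 4\delta > 0$. Second, the standard analysis Lemma~\ref{lem:locally_constant} applied to the integrable function $\psi$ with $\epsilon$ chosen small relative to $c_0/C$ (where $c_0$ and $C$ are the balance constants from balanced lengths~\eqref{cond:balanced_heights} and the HS Condition) produces, at scale $r_k \asymp |I^{(n_k)}|$, a point $x_k$ inside the tower segment for which the set of exceptional points in $B_{r_k}(x_k)$ where $\psi$ deviates from $\psi(x_k)$ by more than $\delta$ has Lebesgue measure less than a small fraction of $|I^{(n_k)}_\alpha|$. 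A counting argument then exhibits $y \in T_0^{i_k}(I^{(n_k)}_\alpha)$ such that both $y$ and $T_0^{q^{(n_k)}_\alpha}(y)$ lie in $B_{r_k}(x_k)$ but are non-exceptional, forcing $|\omega^{(n_k)}_\alpha| < 2\delta$ for every $\alpha$, contradicting $|\omega^{(n_k)}| \geq 4\delta$.

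The main obstacle is arranging the geometric matching in the last step: one must find a ball of radius comparable to $|I^{(n_k)}|$ that simultaneously contains two floors of the same tower $Z_\alpha^{(n_k)}$ which are $q^{(n_k)}_\alpha$ apart in the dynamics, while still having most of its points be non-exceptional for $\psi$. This is precisely where the balanced-heights part~(\ref{cond:balanced_heights}) of the HS Condition is essential: it guarantees that individual tower floors occupy a definite fraction $c_0$ of the base interval $I^{(n_k)}$, so two floors of the same tower automatically sit in a common ball of that scale, and the bad set cannot occupy more than half of either floor. Without the BC/HS combination one loses either the lower bound on $|\omega^{(n_k)}|$ or the geometric room to run the Lebesgue-differentiation argument.
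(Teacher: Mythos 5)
Your proposal is essentially correct, but it is worth noting that the paper does not prove this statement at all: Theorem~\ref{thm:Cobo} is imported verbatim from Cobo's work (which in turn rests on Veech), and is used only in the Appendix to give an \emph{alternative} derivation of Theorem~\ref{thm:regularity}. What you do instead is run the logic in the opposite direction: you combine Theorem~\ref{thm:topconjugacy} with the paper's own direct singularity argument from \S\ref{sc:singularity} (the invariant-density equation $(\varphi\circ T)T'=\varphi$, the transfer to $\psi=\log\varphi\circ h$, the extension of the special-Birkhoff-sum identity along towers via the HS condition, the lower bound on $|\omega^{(n_k)}|$ from Corollary~\ref{cor:central_lower_bound}, and the Lebesgue-differentiation/Egorov counting via Lemma~\ref{lem:locally_constant}), and then conclude that no conjugacy can be absolutely continuous. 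This is a legitimate and self-contained route that avoids citing Cobo, at the price of making the Appendix's deduction circular if one wanted to keep both. A few points deserve tightening: the $T$-invariant measure is the \emph{pullback} $h^*\mathrm{Leb}$ (equivalently $(h^{-1})_*\mathrm{Leb}$) when $h\circ T=T_0\circ h$, not the pushforward $h_*\mathrm{Leb}$, which would be $T_0$-invariant; the balanced-lengths bound $\min_\alpha |I^{k}_\alpha|/|I^{k}|>c_0$ comes from the positivity of the incidence matrices in the BC condition, not from the balanced-heights item~(\ref{cond:balanced_heights}) of the HS condition (the latter controls the tower \emph{heights}, needed so that two floors $q^{(n_k)}_\alpha$ apart both lie in the continuity range of item~(\ref{cond:continuous_iterates})); and the final implication ``$\mu$ singular $\Rightarrow$ $h$ not absolutely continuous'' should be stated explicitly (for a monotone continuous surjection this is Banach--Zarecki: absolute continuity of $h$ is equivalent to $h^*\mathrm{Leb}\ll\mathrm{Leb}$). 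One further technical point, which the paper's \S\ref{sc:singularity} also glosses over, is the justification that $\log\varphi$ (hence $\psi$) is finite almost everywhere so that Lemma~\ref{lem:locally_constant} applies.
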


\begin{proof}[Alternative proof of Theorem \ref{thm:regularity} using M. Cobo's work in  \cite{cobo_piece-wise_2002}]
Notice that for a uniquely ergodic AIET $T$, its unique invariant measure $\mu$ is either singular or absolutely continuous with respect to the Lebesgue measure. Indeed, expressing $\mu = \mu_0 + \mu_1$, where $\mu_0 \ll \textup{Leb}$ and $\mu_1 \bot \textup{Leb}$, and since $T$ preserves the sets of zero Lebesgue measure, it follows that
\[ T_* \mu = T_* \mu_0 + T_*\mu_1 = \mu_0 + \mu_1, \quad T_* \mu_0 \ll \textup{Leb}, \quad T_* \mu_1 \bot \textup{Leb}.\] Hence $T_* \mu_0 = \mu_0$ and $T_* \mu_1 = \mu_1.$ By unique ergodicity, either $\mu_0$ or $\mu_1$ is zero.

 Let $T_0$ and $T$ as in Theorem \ref{thm:topconjugacy}. 
By Theorem \ref{thm:Cobo}, we may assume WLOG that the map conjugating $T_0$ and $T$ is not absolutely continuous. Moreover, since almost every IET is uniquely ergodic (see \cite{masur_interval_1982}, \cite{veech_gauss_1982}), we may assume WLOG that $T_0$ (and hence $T$) is uniquely ergodic. Since the unique invariant probability measure $\mu$ of $T$ is the push-forward of the Lebesgue measure by the conjugating map, and this map is not absolutely continuous, it follows that the $\mu$ is not absolutely continuous with respect to Lebesgue. By the argument at the beginning of the proof, it follows that $\mu$ is singular with respect to the Lebesgue measure. 
\end{proof}

\subsubsection{Acknowledgements} We are indebted to Selim Ghazouani for inspiring discussions. The authors acknowledge the support of the {\it Swiss National Science Foundation} through Grant $200021\_188617/1$. {}

\bibliographystyle{acm}
\bibliography{IET.bib, Bibliography.bib}
\end{document}